\newtheorem{thm}{Theorem}%[section] (If you want theorem numbered
\newtheorem{lema}[thm]{Lemma}%               with section number.  Same
\newtheorem{cor}[thm]{Corollary}%       goes for lemmas, etc.)
\newtheorem{rem}[thm]{Remark}
\newtheorem{defi}[thm]{Definition}
\newtheorem{exa}[thm]{Example}
\newtheorem{pro}[thm]{Problem}
\newcommand{\alg}{{\rm{alg}}}
\date{}
\begin{document}
\setlength{\baselineskip}{16pt}
\title{Quantum Product of Symmetric Functions}
\author{Rafael D\'\i az \ \   and  \ \ Eddy Pariguan}
\maketitle
%%% ----------------------------------------------------------------------
\begin{abstract}
We provide an explicit description of the quantum product of multi-symmetric functions
using the elementary multi-symmetric functions introduced by Vaccarino.
\end{abstract}

\section{Introduction}

Fix a characteristic zero field $\mathbb{K}.$  The algebro-geometric duality  allow us to identify affine algebraic varieties with the $\mathbb{K}$-algebra of polynomial functions on it, and reciprocally, a finitely generated algebra without nilpotent elements may be identified with its spectrum,  provided with the Zarisky topology. Affine space $\mathbb{K}^n$ is thus identified with the algebra of polynomials in $n$-variables $\mathbb{K}[x_1,...,x_n]$.  Consider the action of the symmetric group $S_n$ on $\mathbb{K}^n$ by  permutation of vector entries. The quotient space $\mathbb{K}^n/S_n$  is the configuration space
of $n$ unlabeled points  with repetitions in $\mathbb{K}$.  Polynomial functions on $\mathbb{K}^n/S_n$ may be identified with
the algebra $\mathbb{K}[x_1,...,x_n]^{S_n}$ of $S_n$-invariant polynomials in $\mathbb{K}[x_1,...,x_n]$.
A remarkable classical fact  is that $\mathbb{K}^n/S_n$ is again a $n$-dimensional
affine space \cite{MacD}, indeed we have an isomorphism of algebras
$$\mathbb{K}[x_1,...,x_n]^{S_n} \  \simeq \ \mathbb{K}[e_1,...,e_n] $$
where, for $\alpha \in [n]= \{1,...,n\}$,  $e_{\alpha}$ is the elementary symmetric polynomial given by
$$e_{\alpha}(x_1,...,x_n)\ = \ \sum_{|a|=\alpha}\ x^{\alpha} \ = \ \sum_{a \subseteq [n],\ |a|=\alpha}\ \prod_{j \in a}x_j .$$
The elementary symmetric polynomials are determined by the identity
$$\prod_{i =1}^n(1+x_i t)\ = \  \sum_{\alpha=0}^ne_{\alpha}(x_1,...,x_n)t^{\alpha}.$$
Using characteristic functions one shows for $\alpha_1,...,\alpha_m \in [n]$ that $$e_{\alpha_1}\cdots e_{\alpha_m}\ = \ \sum_{a \in \mathbb{N}^n}c(\alpha_1,...,\alpha_m ,a)x^a$$ where
$c(\alpha_1,...,\alpha_m ,a)$  is the cardinality of the subset of matrices of format $n\times m$ with entries in
$\{0,1 \}$ such that
$$\sum_{j=1}^nA_{ij}=\alpha_i \ \ \mbox{for} \ \ i \in [m], \ \ \ \ \ \ \ \
\sum_{i=1}^mA_{ij}= a_j \ \ \mbox{for}\  \ j \in [n]. $$

A subtler  situation arises when one considers the configuration space $$(\mathbb{K}^d)^n/S_n$$
of $n$ unlabeled points with repetitions in $\mathbb{K}^d$,
for $d \geq 2.$ In this case $(\mathbb{K}^d)^n/S_n$ is not longer an affine space, instead it is an affine algebraic variety.
Polynomial functions on $(\mathbb{K}^d)^n/S_n$ are the so-called multi-symmetric functions, also known as vector
symmetric functions or MacMahon symmetric functions \cite{gel,MacD}, and coincides with the algebra of invariant polynomials $$\mathbb{K}[x_{11},..., x_{1d},......, x_{n1},..., x_{nd}]^{S_n},$$ which admits a presentation of the following form
$$\mathbb{K}[\ e_{\alpha} \ | \ |\alpha| \in [n]\ ]\ / \ \mathrm{I}_{n,d},$$ where
the elementary multi-symmetric functions $e_{\alpha}$, for  $\alpha=(\alpha_1,...,\alpha_d)\in \mathbb{N}^d$ a vector such that $|\alpha|= \alpha_1 + ... + \alpha_d \leq n,$ are defined by the identity
$$\prod_{i =1}^n(1+ x_{i1}t_1 + \cdots + x_{id}t_d)\ = \sum_{\alpha \in  \mathbb{N}^d, \ |\alpha| \leq n}e_{\alpha}(x_{11},..., x_{1d},......, x_{n1},..., x_{nd})t_1^{\alpha_1}...t_d^{\alpha_d}.$$
Explicitly, the multi-symmetric function $e_{\alpha}$ is given by
$$ e_{\alpha}(x_{11},..., x_{1d},......, x_{n1},..., x_{nd})\ = \ \sum_{a}x^a \ = \ \sum_{|a|=\alpha}\ \prod_{j \in [d]} \prod_{i \in a_j}x_{ij} ,$$
where in the middle term we regard $a \in \mathrm{M}_{n\times d}(\{0,1\})$ as a matrix such that
$$\sum_{i=1}^n a_{ij}=\alpha_j \ \ \ \mbox{for} \ \ j \in [d],  \ \ \ \ \ \sum_{j=1}^d a_{ij} \leq 1 \ \ \ \mbox{for} \ \ i \in [n]; \ \ \ \ \ \mbox{and}
\ \ \ \ \ x^a=\prod_{i=1}^n\prod_{j=1}^dx_{ij}^{a_{ij}}\ ;$$
and in the right hand side term we let
$a=(a_1,...,a_d)$ be a $d$-tuple of disjoint sets $a_j \subseteq [n]$ such that $$|a|\ = \ (|a_1|,...,|a_d|)\ = \ (\alpha_1,...,\alpha_d).$$

It is not difficult to check that any multi-symmetric function can be written (not uniquely) as a linear combination of products of elementary multi-symmetric functions. The non-uniqueness is controlled by the ideal $\mathrm{I}_{n,d}$.  For an explicit description of  $\mathrm{I}_{n,d}$ the reader
may consult Dalbec \cite{Dal} and Vaccarino \cite{Vac}. \\

One checks for $\ \alpha_1,...,\alpha_m \in \mathbb{N}^d \ $ that:
$$e_{\alpha_1}\cdots e_{\alpha_m}\ = \ \sum_{a \in \mathrm{M}_{n\times d}(\mathbb{N})}c(\alpha_1,...,\alpha_m ,a)x^a,$$
where $c(\alpha_1,...,\alpha_m ,a)$
counts the number of cubical matrices
$$A=(A_{ijl})\ \in \  \mathrm{Map}([m]\times [n] \times [d], \{0,1 \})$$ such that
$$\sum_{i=1}^mA_{ijl}= a_{jl} \ \ \mbox{for} \ \ j \in [n], \ l \in [d], \ \ \ \ \ \ \ \sum_{l=1}^dA_{ijl}\leq 1 \ \ \mbox{for} \ \ i \in [m], \  j \in [n],
 \ \ \ \ \ \mbox{and}$$ $$\sum_{j=1}^nA_{ijl}= (\alpha_i)_l \ \ \mbox{for} \ \ i \in [m], \ l \in [d] .$$

Recall that an algebra may be analyzed by describing it by generators and relations or
alternatively, as emphasized by Rota and his collaborators, by finding
a suitable basis such that the structural coefficients are positive integers with preferably a nice combinatorial interpretation. The second  approach for the case of multi-symmetric functions was undertaken by Vaccarino \cite{Vac}, and his results will be reviewed in Section \ref{4}.
The main goal of this work, see Section \ref{5}, is to generalize this combinatorial approach to multi-symmetric functions from the classical to the quantum setting.\\

Quantum mechanics, the century old leading small distances physical theory, is
still not quite fully understood by mathematicians. The transition from classical to quantum mechanics
has been particularly difficult to grasp. An appealing approach to this problem is to characterize the process of quantization as
a process of deformation of a commutative Poisson algebra into a non-commutative algebra \cite{bay}.
In this approach classical phase space is replaced by  quantum phase space, where an extra dimension parametrized by a formal variable $\hbar$ is added.\\

The classical phase space of a Lagrangian theory is naturally endowed with a closed two-form. In the non-degenerated case (i.e. in the symplectic case) this two-form can be inverted given rise to a Poisson brackect on the algebra of smooth functions on phase space. In a sense, the Poisson bracket may be regarded as a tangent vector in the space of deformations of the algebra of functions on phase space, i.e. as an infinitesimal deformation. That this infinitesimal deformation can be integrated into a formal deformation is a result due to Fedosov \cite{fed} for the symplectic case, and to Kontsevich \cite{Kon} for arbitrary Poisson manifolds.\\

Many Lagrangian physical theories are invariant under a continuous group of transformations; in that case the two-form on phase space is necessarily degenerated. Nevertheless, a Lagrangian theory might be invariant under a finite group and still retain its non-degenerated character. In the latter scenario  all the relevant constructions leading to the quantum algebra of functions on phase space are equivariant, and thus give rise to a quantum algebra of invariant functions  under the finite group. We follow this path along this work, being as explicit and calculative as possible. Our main aim is thus to provide foundations as well as practical tools for dealing with quantum symmetric functions.

\section{Multi-Symmetric Functions}\label{4}

In this section we introduce Vaccarino's multi-symmetric functions $e_{\alpha}(p)$ which are defined in analogy with the elementary multi-symmetric functions of the Introduction, a yet the definition is general enough to account for the symmetrization of arbitrary polynomial functions \cite{Vac}.\\

Fix $a,n,d\in \mathbb{N}^+.$ Let $\ y_1,...,y_d \ $ and $\ t_1,...,t_a \ $ be a pair of sets of commuting independent variables over $\mathbb{K}$.
For $\ \alpha = (\alpha_1, ... , \alpha_a)\in \mathbb{N}^a \ $  we set
$${\displaystyle|\alpha|=\sum_{i=1}^a \alpha_i}\ \ \ \ \ \ \ \mbox{and}\ \ \ \ \ \ \ {\displaystyle t^{\alpha}=\prod_{i=1}^{a}t_i^{\alpha_i}}.$$
For $\ q\in \mathbb{K}[y_1,...,y_d] \ $ and $\ i\in[n]\ $  we let $q(i)\ = \ q(x_{i1},...,x_{id})$ be the polynomial obtained by replacing each appearance of $y_j$ in $q$ by $x_{ij}$,  for $j\in[d]$. For example, for $n=2 \ $ and $\ q=y_1y_2y_3\in \mathbb{R}[y_1,y_2,y_3]$ we have that $$q(1)=x_{11}x_{12}x_{13} \ \ \ \ \ \mbox{and} \ \ \ \ \ q(2)=x_{21}x_{22}x_{23}.$$

\begin{defi}\label{vacdef}{\em
Consider $\alpha\in\mathbb{N}^a$ such that  $|\alpha|\leq n$, and
$p= (p_1,..., p_a)\in \mathbb{K}[y_1,...,y_d]^a$. The  multi-symmetric functions
$e_{\alpha}(p) \in \mathbb{K}[(\mathbb{K}^{d})^{n}]^{S_n}$ are determined by the identity:
\begin{equation}
\prod_{i=1}^{n}\left( 1+  p_{1}(i)t_{1} + \cdots +  p_{a}(i)t_{a} \right) \ = \ \sum_{|\alpha|\leq n}e_{\alpha}(p)t^{\alpha}.\nonumber
\end{equation}
}
\end{defi}

\begin{exa}{\em
For $n=3$ and $p=(y_1y_2,y_3y_4)$, we have that  $\ e_{(1,1)}(y_1y_2,y_3y_4)$ is equal to
$$x_{13}x_{14}x_{21}x_{22}\ +\ x_{21}x_{22}x_{33}x_{34}\ +\ x_{23}x_{24}x_{31}x_{32}+x_{11}x_{12}x_{33}x_{34}\ +\ x_{13}x_{14}x_{31}x_{32}\ +\ x_{11}x_{12}x_{23}x_{24},$$
$$\mbox{and} \ \ \ \ e_{(2,1)}(y_1y_2,y_3y_4)\ = \ x_{1 1}x_{12}x_{21}x_{22}x_{33}x_{34}\ +\ x_{11}x_{12}x_{23}x_{24}x_{31}x_{32}\ +\ x_{13}x_{14}x_{21}x_{22}x_{31}x_{32}.$$

}
\end{exa}

\begin{exa}{\em
For $p=(y_1,..., y_d)$ and $\alpha\in\mathbb{N}^d$ with  $|\alpha|\in [n]$, the multi-symmetric functions $e_{\alpha}(y_1,...,y_d)$ are
the elementary multi-symmetric functions defined in the Introduction.}
\end{exa}

Next couple of Lemmas follow directly from Definition  \ref{vacdef}.

\begin{lema}{\em Let $\alpha\in\mathbb{N}^a$ be such that  $|\alpha|\leq n$, and let $p= (p_1,..., p_a)\in \mathbb{K}[y_1,...,y_d]^a$. The  multi-symmetric function $e_{\alpha}(p)$ is given by the combinatorial identity
\begin{equation}
e_{\alpha}(p)\ = \ \sum_{c} \prod_{l=1}^a\prod_{i\in c_l}p_{l}(i),\nonumber
\end{equation}
where $c=(c_1,...,c_a)$ is a tuple  of disjoint subsets of $[n]$, with
$|c| =  (|c_1|,...,|c_a|)  =  \alpha.$
}
\end{lema}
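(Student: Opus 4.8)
The plan is to expand the defining product of Definition \ref{vacdef} directly and to read off the coefficient of $t^\alpha$. First I would apply the distributive law to
$$\prod_{i=1}^{n}\left( 1+  p_{1}(i)t_{1} + \cdots +  p_{a}(i)t_{a} \right),$$
observing that to form a single term of the expansion one must, independently for each $i\in[n]$, choose exactly one of the $a+1$ summands offered by the $i$-th factor: either the constant $1$, or one of the monomials $p_l(i)t_l$ with $l\in[a]$.

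The key bookkeeping step is to encode such a choice as a tuple $c=(c_1,\dots,c_a)$ of pairwise disjoint subsets of $[n]$, where $c_l=\{\,i\in[n] : \text{the summand } p_l(i)t_l \text{ is selected from factor } i\,\}$; an index $i$ contributing the constant $1$ then belongs to none of the $c_l$. Disjointness is forced because a single factor supplies at most one summand, and conversely every such tuple of disjoint subsets arises from a unique choice, so this correspondence is a bijection. The term produced by $c$ is
$$\prod_{l=1}^a\prod_{i\in c_l}p_l(i)t_l \ = \ \left(\prod_{l=1}^a\prod_{i\in c_l}p_l(i)\right)t_1^{|c_1|}\cdots t_a^{|c_a|} \ = \ \left(\prod_{l=1}^a\prod_{i\in c_l}p_l(i)\right)t^{|c|}.$$

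Finally I would group the terms of the expansion according to their exponent vector $|c|=(|c_1|,\dots,|c_a|)$. Since the $c_l$ are disjoint subsets of $[n]$, every exponent that occurs satisfies $\sum_{l=1}^a|c_l|\leq n$, so only the monomials $t^\alpha$ with $|\alpha|\leq n$ appear, matching the summation range in the definition. Comparing the coefficient of $t^\alpha$ on both sides of the defining identity---the monomials $t^\alpha$ being linearly independent over $\mathbb{K}[(\mathbb{K}^d)^n]$---then yields $e_\alpha(p)=\sum_{c:\,|c|=\alpha}\prod_{l=1}^a\prod_{i\in c_l}p_l(i)$, as claimed. There is no substantial obstacle: the only point deserving care is verifying that the passage from summand-choices to tuples of disjoint subsets is a genuine bijection, so that no term of the expansion is double-counted or omitted.
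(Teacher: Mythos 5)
Your proof is correct and follows exactly the route the paper intends: the paper offers no written argument, stating only that the lemma ``follows directly from Definition \ref{vacdef}'', and your direct expansion of the product $\prod_{i=1}^{n}\bigl(1+p_1(i)t_1+\cdots+p_a(i)t_a\bigr)$, with the bijection between summand-choices and tuples of disjoint subsets, is precisely that direct verification. Nothing is missing; your care about the bijection and the coefficient comparison simply makes explicit what the paper leaves to the reader.
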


Recall that the symmetrization map $\ \mathbb{K}[(\mathbb{K}^{d})^n] \ \longrightarrow \ \mathbb{K}[(\mathbb{K}^{d})^{n}]^{S_n} \ $ sends $f$ to
$$\sum_{\sigma \in S_n} f\circ \sigma,$$
where we regard $\sigma\in S_n$ as a map $\sigma: (\mathbb{K}^{d})^n \ \longrightarrow \ (\mathbb{K}^{d})^n.$

\begin{lema}\label{sim}
{\em Let $\alpha\in\mathbb{N}^a$ be such that  $|\alpha|\leq n$, and $p= (p_1,..., p_a)\in \mathbb{K}[y_1,...,y_d]^a$. The multi-symmetric function $e_{\alpha}(p)$ is the symmetrization of the polynomial
$$p_1(1)...p_1(\alpha_1)......p_i(\sum_{l=1}^{i-1}\alpha_l + 1) ... p_i(\sum_{l=1}^{i}\alpha_l) ...... p_a(\sum_{l=1}^{a-1}\alpha_l + 1) ... p_a(\sum_{l=1}^a\alpha_l).$$
}

\end{lema}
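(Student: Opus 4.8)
The plan is to deduce the statement from the combinatorial expansion established in the preceding Lemma, namely $e_{\alpha}(p) = \sum_c \prod_{l=1}^a \prod_{i \in c_l} p_l(i)$, where $c = (c_1, \ldots, c_a)$ runs over all tuples of pairwise disjoint subsets of $[n]$ with $|c_l| = \alpha_l$. First I would recognize the distinguished polynomial in the statement, call it $f$, as the single term of this sum indexed by the \emph{standard} tuple $c^0 = (I_1, \ldots, I_a)$, where $I_l = \{\sum_{m=1}^{l-1}\alpha_m + 1, \ldots, \sum_{m=1}^{l}\alpha_m\}$ is the block of consecutive integers of length $\alpha_l$; by construction $f = \prod_{l=1}^a \prod_{i \in I_l} p_l(i)$.

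Next I would compute the $S_n$-action explicitly. Since $\sigma \in S_n$ permutes the $n$ points of $(\mathbb{K}^d)^n$, it acts on coordinates by $x_{ij} \mapsto x_{\sigma(i)j}$, so each factor $p_l(i)$ is carried to $p_l(\sigma(i))$ and therefore $f \circ \sigma = \prod_{l=1}^a \prod_{i \in \sigma(I_l)} p_l(i)$. In other words, $f \circ \sigma$ is precisely the term of the combinatorial sum indexed by the tuple $\sigma \cdot c^0 = (\sigma(I_1), \ldots, \sigma(I_a))$. The key observation is then that as $\sigma$ ranges over $S_n$ the tuple $\sigma \cdot c^0$ ranges over \emph{all} admissible tuples $c$ (the symmetric group acts transitively on ordered set-partitions of a fixed block-type), so that summing $f \circ \sigma$ reproduces every monomial appearing in $e_\alpha(p)$.

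The hard part, and the only real content beyond bookkeeping, is to pin down the multiplicity with which each tuple $c$ is hit, i.e. the order of the stabilizer of $c^0$. I would note that $\sigma$ fixes $c^0$ (equivalently fixes $f$) exactly when it permutes each block $I_l$ within itself and permutes the complementary indices $[n] \setminus [|\alpha|]$ among themselves; hence $|\mathrm{Stab}(c^0)| = \left(\prod_{l=1}^a \alpha_l!\right)(n - |\alpha|)!$, and every admissible $c$ has this same number of preimages. Collecting terms I would obtain
\begin{equation}
\sum_{\sigma \in S_n} f \circ \sigma \ = \ \left(\prod_{l=1}^a \alpha_l!\right)(n - |\alpha|)! \sum_c \prod_{l=1}^a \prod_{i \in c_l} p_l(i) \ = \ \left(\prod_{l=1}^a \alpha_l!\right)(n - |\alpha|)!\, e_\alpha(p), \nonumber
\end{equation}
so that the symmetrization of $f$ agrees with $e_\alpha(p)$ up to this positive integer factor; under the normalization implicit in the statement (summing over distinct rearrangements, equivalently dividing by $|\mathrm{Stab}(c^0)|$) the two coincide exactly. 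I expect the stabilizer count to be the step most in need of care, particularly verifying that no accidental symmetries arise, which is cleanest if the $p_l$ are treated as generic (formal) so that the monomials indexed by distinct $c$ stay distinct.
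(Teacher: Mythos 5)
You have supplied a proof where the paper gives none: the authors merely assert that Lemma \ref{sim} ``follows directly from Definition \ref{vacdef}'' and move on, so yours is the argument of record. Its core is correct and is surely what the authors intend: identify the stated polynomial $f$ with the term of the expansion $e_\alpha(p)=\sum_c\prod_{l=1}^a\prod_{i\in c_l}p_l(i)$ indexed by the standard tuple $c^0=(I_1,\ldots,I_a)$ of consecutive blocks, observe that $f\circ\sigma$ is the term indexed by $\sigma\cdot c^0$, and count the fibers of $\sigma\mapsto\sigma\cdot c^0$ by orbit--stabilizer. Note that your genericity hedge is unnecessary for the main count: the identity
$$\sum_{\sigma\in S_n} f\circ\sigma \ = \ \Bigl(\prod_{l=1}^a\alpha_l!\Bigr)(n-|\alpha|)!\ e_\alpha(p)$$
concerns the fibers of a map into \emph{tuples of sets}, so it holds verbatim even when distinct tuples produce equal polynomials.

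Two points do need repair. First, your parenthetical ``summing over distinct rearrangements, equivalently dividing by $|\mathrm{Stab}(c^0)|$'' asserts an equivalence that fails precisely in the degenerate cases you were worried about: for $n=2$, $p=(y_1,y_1)$, $\alpha=(1,1)$ one has $e_{(1,1)}(y_1,y_1)=2x_{11}x_{21}$ and $|\mathrm{Stab}(c^0)|=1$, while the sum of the \emph{distinct} polynomials in the $S_2$-orbit of $f=x_{11}x_{21}$ is just $x_{11}x_{21}$; relatedly, ``$\sigma$ fixes $c^0$ (equivalently fixes $f$)'' should be an implication only, since the stabilizer of the polynomial $f$ can strictly contain the Young subgroup stabilizing the tuple $c^0$. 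The only unconditional normalization is division by the Young-subgroup order $\bigl(\prod_l\alpha_l!\bigr)(n-|\alpha|)!$. Second, your computation in fact exposes an imprecision in the paper itself: with the paper's own definition of symmetrization as the unnormalized sum $f\mapsto\sum_{\sigma\in S_n}f\circ\sigma$, the lemma as printed holds only up to that integer factor (already for $n=2$, $p=(y_1)$, $\alpha=(2)$: the symmetrization of $x_{11}x_{21}$ is $2x_{11}x_{21}$, yet $e_2(y_1)=x_{11}x_{21}$). So your final equation is the correct statement, and the lemma should be read with the normalization you describe, provided ``distinct rearrangements'' is replaced by division by the Young-subgroup order.
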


\begin{lema}\label{l6}
{\em Let $p= (p_1,..., p_a)\in \mathbb{K}[y_1,...,y_d]^a$ be expanded in monomials as
$$p_1 =    \sum_{j_1\in [k_1]}c_{1j_1}m_{1j_1}, \ ........,\ p_a =    \sum_{j_a\in [k_a]}c_{aj_a}m_{aj_a} .$$ Then
$$e_{\alpha}(p) \ = \  \ \sum_{\beta \in \mathbb{N}^{|k|}, \ r(\beta)= \alpha }e_{\beta}(m)c^{\beta},$$
where $m=(m_{11},...,m_{1k_1},......,m_{a1},...,m_{aj_a}), \ \ \ \ c = (c_{11},..., c_{1k_1},.......,c_{a1},..., c_{ak_a} ), \ \  \mbox{and for}$
$\beta=(\beta_{11},..., \beta_{1k_1},.......,\beta_{a1},..., \beta_{ak_a}) \in \mathbb{N}^{|k|} \ $ we set
$$r(\beta) \ = \ (\beta_{11}+ \cdots + \beta_{1k_1},.........,\beta_{a1}+ \cdots  +\beta_{ak_a})  .$$}
\end{lema}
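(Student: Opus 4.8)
The plan is to reduce everything to the generating-function identity of Definition \ref{vacdef}, applied to two different tuples. First I would apply the definition to the tuple $m=(m_{11},\dots,m_{ak_a})$, which has $|k|=k_1+\cdots+k_a$ entries; this requires $|k|$ auxiliary variables, which I index as $s_{lj}$ for $l\in[a]$ and $j\in[k_l]$. The definition then reads
$$\prod_{i=1}^n\Bigl(1+\sum_{l=1}^a\sum_{j\in[k_l]}m_{lj}(i)\,s_{lj}\Bigr)\ =\ \sum_{|\beta|\le n}e_\beta(m)\,s^\beta,$$
where $s^\beta=\prod_{l,j}s_{lj}^{\beta_{lj}}$. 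In parallel, the definition applied to $p$ with variables $t_1,\dots,t_a$ is precisely what defines the functions $e_\alpha(p)$.

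The key step is the substitution $s_{lj}=c_{lj}t_l$, understood as the algebra homomorphism over the coefficient ring $\mathbb{K}[(\mathbb{K}^d)^n]^{S_n}$ determined by sending each $s_{lj}$ to the scalar multiple $c_{lj}t_l$; applying such a map to both sides of an identity preserves that identity. On the left-hand side this substitution turns the $(l,j)$-summand into $c_{lj}m_{lj}(i)\,t_l$, so that for each $i$ the inner sum collapses to $\sum_l\bigl(\sum_j c_{lj}m_{lj}(i)\bigr)t_l=\sum_l p_l(i)\,t_l$, using the monomial expansion $p_l=\sum_j c_{lj}m_{lj}$ and hence $p_l(i)=\sum_j c_{lj}m_{lj}(i)$. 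Thus the left-hand side becomes exactly $\prod_{i=1}^n\bigl(1+\sum_l p_l(i)t_l\bigr)$, which by Definition \ref{vacdef} equals $\sum_{|\alpha|\le n}e_\alpha(p)\,t^\alpha$.

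It then remains to track the right-hand side under the same substitution. Here $s^\beta$ becomes $\prod_{l,j}(c_{lj}t_l)^{\beta_{lj}}=\bigl(\prod_{l,j}c_{lj}^{\beta_{lj}}\bigr)\prod_l t_l^{\sum_j\beta_{lj}}=c^\beta\,t^{r(\beta)}$, since $r(\beta)_l=\sum_j\beta_{lj}$ by definition of $r$. Comparing the two resulting expressions gives
$$\sum_{|\alpha|\le n}e_\alpha(p)\,t^\alpha\ =\ \sum_{|\beta|\le n}e_\beta(m)\,c^\beta\,t^{r(\beta)},$$
and extracting the coefficient of $t^\alpha$ on both sides yields $e_\alpha(p)=\sum_{r(\beta)=\alpha}e_\beta(m)\,c^\beta$, as claimed.

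I do not expect a genuine obstacle here: the argument is a single substitution followed by comparison of coefficients. The only points requiring care are bookkeeping ones: confirming that $s_{lj}\mapsto c_{lj}t_l$ is a well-defined ring map, so the formal identity is genuinely preserved, and checking that every $\beta$ occurring in the final sum satisfies $|\beta|=\sum_{l,j}\beta_{lj}=\sum_l r(\beta)_l=|\alpha|\le n$. This degree consistency guarantees that each $e_\beta(m)$ is legitimately defined and that no truncation issue arises when collecting coefficients.
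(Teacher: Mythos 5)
Your proof is correct and is essentially the paper's own argument: both rest on the substitution $s_{lj}\mapsto c_{lj}t_l$ (the paper's vector $ct$) in the generating-function identity of Definition \ref{vacdef}, the observation that $(ct)^\beta=c^\beta t^{r(\beta)}$, and extraction of the coefficient of $t^\alpha$. You merely spell out the substitution as a ring homomorphism and verify the degree bound $|\beta|=|\alpha|\le n$, which the paper leaves implicit.
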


\begin{proof} Let $\ |k| = k_1 + \cdots + k_a \ $ and
$\ ct= (c_{11}t_1,..., c_{1k_1}t_1,.......,c_{a1}t_1,..., c_{ak_a}t_a ).\ $ Then
$$ \sum_{\alpha \in \mathbb{N}^a, \ |\alpha|\leq n}e_{\alpha}(p)t^{\alpha} \ = \ \prod_{i=1}^{n}\left( 1+  \sum_{j_1\in [k_1]}c_{1j_1}m_{1j_1}(i)t_{1} + \cdots +  \sum_{j_a\in [k_a]}c_{aj_a}m_{aj_a}(i)t_{a} \right)$$ $$ \ \ \ \ \ \ \ \ \ \ \ \ \ \ \ \  \ = \ \sum_{\beta \in \mathbb{N}^{|k|}, \ |\beta|\leq n}e_{\beta}(m)(ct)^{\beta} \ = \ \sum_{\beta \in \mathbb{N}^{|k|}, \ |\beta|\leq n}e_{\beta}(m)c^{\beta}t^{r(\beta)}. $$
Thus we get that
$$e_{\alpha}(p) \ = \   \sum_{\beta \in \mathbb{N}^{|k|}, \ r(\beta)= \alpha }e_{\beta}(m)c^{\beta}.$$

\end{proof}

The following result due to Vaccarino \cite{Vac} provides an explicit formula for the product  of multi-symmetric functions. We include the proof since the same technique carries over to the more involved quantum case.

\begin{thm} \label{VACC}{\em
 Fix $a,b, n \in\mathbb{N}^+$,  $p\in \mathbb{K}[y_1,...,y_d]^a$, and $q\in \mathbb{K}[y_1,...,y_d]^b$.  Let $\alpha\in \mathbb{N}^a$ and  $\beta\in \mathbb{N}^b$ be such that $|\alpha|, |\beta|\leq n$. Then we have that:
$$e_{\alpha}(p)e_{\beta}(q)\ = \ \sum_{\gamma\in \mathrm{L}(\alpha,\beta,n)}e_{\gamma}(p,q,pq),\ \ \ \ \ \ \mbox{where:}$$
\begin{itemize}
\item  $(p,q,pq)\ = \ (p_1,..., p_a, q_1,..., q_b, p_1q_1,..., p_1q_b, ......, p_aq_1,..., p_aq_b).$

\item $\mathrm{L}(\alpha,\beta,n)$ is the set of matrices $\gamma\in \mathrm{Map}([0,a]\times[0,b], \mathbb{N})$  such that:
$$\gamma_{00}=0, \ \ {\displaystyle|\gamma|=\sum_{l=0}^a\sum_{r=0}^b\gamma_{lr}\leq n},   \ \ {\displaystyle\sum_{r=0}^b\gamma_{lr}=\alpha_l} \ \mathrm{for} \   l\in[a],   \ \  \mathrm{and}  \ \ {\displaystyle\sum_{l=0}^a\gamma_{lr}=\beta_r} \  \mathrm{for} \  r\in[b].$$
\end{itemize}
}
\end{thm}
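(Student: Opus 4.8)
The plan is to mimic the generating-function computation used in the proof of Lemma \ref{l6}, now applied to a \emph{product} of two defining series. The starting observation is that $e_\alpha(p)e_\beta(q)$ is the coefficient of $t^\alpha s^\beta$ in the product of the two series from Definition \ref{vacdef}, taken in two disjoint sets of variables $t=(t_1,\ldots,t_a)$ and $s=(s_1,\ldots,s_b)$:
$$\left(\sum_{|\alpha|\leq n}e_\alpha(p)t^\alpha\right)\left(\sum_{|\beta|\leq n}e_\beta(q)s^\beta\right) = \prod_{i=1}^n\Bigl(1+\sum_{l=1}^a p_l(i)t_l\Bigr)\prod_{i=1}^n\Bigl(1+\sum_{r=1}^b q_r(i)s_r\Bigr).$$
First I would merge the two products over $i$ into a single one and expand each factor, using that $(p_lq_r)(i)=p_l(i)q_r(i)$:
$$\Bigl(1+\sum_l p_l(i)t_l\Bigr)\Bigl(1+\sum_r q_r(i)s_r\Bigr) = 1+\sum_l p_l(i)t_l+\sum_r q_r(i)s_r+\sum_{l,r}p_l(i)q_r(i)\,t_ls_r.$$

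Next I would introduce the defining series for the enlarged tuple $(p,q,pq)$, whose entries I index by $([0,a]\times[0,b])\setminus\{(0,0)\}$, assigning the formal variable $u_{l0}$ to $p_l$, the variable $u_{0r}$ to $q_r$, and the variable $u_{lr}$ to $p_lq_r$. By Definition \ref{vacdef} this series is
$$\sum_{|\gamma|\leq n}e_\gamma(p,q,pq)\,u^\gamma = \prod_{i=1}^n\Bigl(1+\sum_{l}p_l(i)u_{l0}+\sum_r q_r(i)u_{0r}+\sum_{l,r}p_l(i)q_r(i)\,u_{lr}\Bigr).$$
The crucial step is the substitution $u_{l0}=t_l$, $u_{0r}=s_r$, $u_{lr}=t_ls_r$: by the expansion above each factor on the right collapses to $(1+\sum_l p_l(i)t_l)(1+\sum_r q_r(i)s_r)$, so the two displayed series become equal, giving
$$\sum_{|\gamma|\leq n}e_\gamma(p,q,pq)\,t^{\mu(\gamma)}s^{\nu(\gamma)} = \Bigl(\sum_\alpha e_\alpha(p)t^\alpha\Bigr)\Bigl(\sum_\beta e_\beta(q)s^\beta\Bigr),$$
where the substitution sends $u^\gamma$ to $t^{\mu(\gamma)}s^{\nu(\gamma)}$ with $\mu(\gamma)_l=\sum_{r=0}^b\gamma_{lr}$ and $\nu(\gamma)_r=\sum_{l=0}^a\gamma_{lr}$, these being the row and column sums of $\gamma$. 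Since all $e$'s have bounded degree, this is a genuine substitution of polynomials and no convergence issue arises.

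Finally I would compare the coefficient of $t^\alpha s^\beta$ on both sides. On the right it is $e_\alpha(p)e_\beta(q)$; on the left it is the sum of $e_\gamma(p,q,pq)$ over all $\gamma$ with $|\gamma|\leq n$, $\mu(\gamma)=\alpha$, and $\nu(\gamma)=\beta$, which are exactly the row-sum, column-sum, and size conditions defining $\mathrm{L}(\alpha,\beta,n)$, the constraint $\gamma_{00}=0$ being automatic since $(0,0)$ indexes no entry of the tuple. The one point requiring care --- and the only genuine obstacle --- is that the substitution $u_{lr}\mapsto t_ls_r$ is far from injective on monomials: many distinct $\gamma$ are carried to the same $t^\alpha s^\beta$. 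It is precisely this collapsing of the fiber that forces a \emph{sum} over $\mathrm{L}(\alpha,\beta,n)$ on the right, and one must verify that the fiber of $(\mu,\nu)$ over $(\alpha,\beta)$, intersected with $\{|\gamma|\leq n\}$, is captured by the listed constraints with neither over- nor under-counting.
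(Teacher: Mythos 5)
Your proposal is correct and follows essentially the same route as the paper's proof: both merge the two defining products over $i$, expand each factor into the four groups of terms, recognize the result as the defining series for $e_\gamma(p,q,pq)$ in the variables $t_ls_r$, and read off the row-sum and column-sum constraints by matching monomials. Your only deviation is cosmetic --- you introduce fresh variables $u_{lr}$ and then substitute $u_{l0}=t_l$, $u_{0r}=s_r$, $u_{lr}=t_ls_r$, whereas the paper writes $w_{lr}=t_ls_r$ directly --- and the fiber-counting point you flag at the end is already settled by your own observation that $u^\gamma\mapsto t^{\mu(\gamma)}s^{\nu(\gamma)}$, which is exactly the computation the paper performs.
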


\begin{proof}

Identify the matrix $\gamma$ with the vector
$$\gamma\ = \
(0, \gamma_{01},...,\gamma_{0b},\gamma_{10},...,\gamma_{a0}, \gamma_{20},...,\gamma_{2b}, ......, \gamma_{a0},...,\gamma_{ab}).$$
We have that $$\sum_{ |\alpha|,|\beta|\leq n}e_{\alpha}(p)e_{\beta}(q)t^{\alpha}s^{\beta}$$ is equal to
\begin{eqnarray*}
&=&
\left(\sum_{|\alpha|\leq n}e_{\alpha}(p)t^{\alpha}\right)\left(\sum_{|\beta|\leq n}e_{\beta}(q)s^{\beta}\right)\\
\mbox{} &=& \prod_{i=1}^{n}\left(1\ +\ \sum_{l=1}^a p_l(i) t_l\right) \prod_{i=1}^{n}\left(1\ + \ \sum_{r=1}^b q_r (i)s_r\right)\\
\mbox{} &=& \prod_{i=1}^{n}\left(1\ + \ \sum_{l=1}^a p_l(i) t_l\right)\left(1\ + \ \sum_{r=1}^b q_r (i)s_r\right)\\
\mbox{} &=& \prod_{i=1}^{n}\left(1\ + \ \sum_{l=1}^a p_l(i) t_l \ + \ \sum_{r=1}^b q_r (i)s_r\ + \ \sum_{l=1}^a \sum_{r=1}^b p_l(i) q_r (i) t_l s_r\right)\\
\mbox{} &=& \prod_{i=1}^{n}\left(1\ + \ \sum_{l=1}^ap_l(i)w_{l0}\ + \ \sum_{r=1}^bq_r(j)w_{0r}\ + \ \sum_{l=1}^a \sum_{r=1}^b p_l(i)q_r (i)w_{lr}\right)\\
\mbox{} &=& \sum_{\gamma}e_{\gamma}(p,q,pq)w^{\gamma},
\end{eqnarray*}
where for $\gamma \in \mathrm{L}(\alpha,\beta,n)$ we set
 $$w^{\gamma}\ = \ \prod\limits_{l=0}^{a}\prod\limits_{r=0}^{b}w_{lr}^{\gamma_{lr}}\ = \ \prod\limits_{l=0}^{a}\prod\limits_{r=0}^{b}(t_ls_r)^{\gamma_{lr}}\ = \
 \prod\limits_{l=0}^{a}\prod\limits_{r=0}^{b}t_l^{\gamma_{lr}}s_r^{\gamma_{lr}},$$
using the conventions $$t_0=s_0=1,\  \  \ \  \ \ \ \ \  w_{lr}=t_ls_r  \ \ \ \ \mbox{for}\ \ \ l,r \geq 0.$$
For $w^{\gamma}$ to be equal to $t^{\alpha}s^{\beta}$ we must have that
 $$w^{\gamma} \ = \ \prod\limits_{l=0}^{a}\prod\limits_{r=0}^{b}t_l^{\gamma_{lr}}s_r^{\gamma_{lr}} \ = \
 \left( \prod\limits_{l=1}^{a}t_l^{\sum\limits_{r=0}^{b}\gamma_{lr}}\right)\left( \prod\limits_{r=1}^{b}s_r^{\sum\limits_{l=0}^{a}\gamma_{lr}}\right) \ = \
 \left( \prod\limits_{l=1}^{a}t_l^{\alpha_l}\right)\left( \prod\limits_{k=1}^{b}s_r^{\beta_r}\right).$$
 Thus we conclude that  $${\displaystyle\sum_{r=0}^b\gamma_{lr}=\alpha_l}\  \ \mbox{for}\ \ l\in[a],\ \ \ \ \ \ \ \ \ \ \mbox{and}\ \ \ \ \ \ \ \ \ \ {\displaystyle\sum_{l=0}^a\gamma_{lr}=\beta_r}\ \ \mbox{for}\ \ r\in[b].$$
\end{proof}
\noindent Graphically, a matrix $\gamma\in \mathrm{L}(\alpha,\beta,n)$ is represented as
$$\begin{array}{cccccccc}
0 &\gamma_{01}&\gamma_{02}&\gamma_{03}&\cdots&\gamma_{0b}& \mbox{ }&\mbox{ }\\
\gamma_{10}&\gamma_{11}&\gamma_{12}&\gamma_{13}&\cdots&\gamma_{1b}& \longrightarrow&\alpha_1\\
\gamma_{20}&\gamma_{21}&\gamma_{22}&\gamma_{23}&\cdots&\gamma_{2b}& \longrightarrow&\alpha_2\\
\mbox{ } &\vdots&\vdots&\vdots&\mbox{ } &\vdots & \mbox{ }&\vdots\\
\mbox{ } &\vdots&\vdots&\vdots&\mbox{ } &\vdots & \mbox{ }&\vdots\\
\gamma_{a0}&\gamma_{a1}&\gamma_{a2}&\gamma_{a3}&\cdots&\gamma_{ab}& \longrightarrow&\alpha_a\\
\mbox{ } &\downarrow&\downarrow&\downarrow& \mbox{ } &\downarrow & \mbox{ }&\mbox{ }\\
\mbox{ } &\beta_1&\beta_2&\beta_3&\cdots &\beta_b & \mbox{ }&\mbox{ }
\end{array}$$
where the horizontal and vertical arrows represent, respectively,  row and column sums.

\begin{exa}{\em  For  $n=3$, $\ p=(y_1y_2,y_1)\ $ and  $\ q=(y_1y_2,y_3)\ $ we have that:
$$e_{(1,1)}(y_1y_2,y_1)e_{(2,1)}(y_1y_2,y_3)\ = \ \sum_{\gamma}e_{\gamma}(y_1y_2,y_1,y_1y_2,y_3,y_1^2y_2^2,y_1y_2y_3,y_1^2y_2,y_1y_3),$$
where $\gamma=(\gamma_{10},\gamma_{20},\gamma_{01},\gamma_{02},\gamma_{11},\gamma_{12},\gamma_{21},\gamma_{22})\in\mathbb{N}^8$ is such that $\ |\gamma|\leq 3\ $ and $$\gamma_{10}+\gamma_{11}+\gamma_{12}=1, \ \ \ \ \ \
\gamma_{20}+\gamma_{21}+\gamma_{22}=1, \ \ \ \ \ \
\gamma_{01}+\gamma_{11}+\gamma_{21}=2, \ \ \ \ \ \
\gamma_{02}+\gamma_{12}+\gamma_{22}=1.$$
Looking at the solutions in $\mathbb{N}$ of  the  system of  linear equations above we obtain that:
$$e_{(1,1)}(y_1y_2,y_1)e_{(2,1)}(y_1y_2,y_3)=$$
$$e_{(1,1,1)}(y_3,y_1^2y_2^2,y_1^2y_2)\ +\ e_{(1,1,1)}(y_1y_2,y_1y_2y_3,y_1^2y_2)\ + \ e_{(1,1,1)}(y_1y_2,y_1^2y_2^2,y_1y_3).$$}
\end{exa}

\begin{exa}
{\em  For $n=4$, $\ p=(y_1^2y_2,y_2^3y_3,y_1y_2y_3)\ $ and $\ q=(y_1^3y_2^2y_3,y_1^2y_3,y_2y_3)\ $ we have that:
$$e_{(1,1,1)}(y_1^2y_2,y_2^3y_3,y_1y_2y_3)e_{(1,2,1)}(y_1^3y_2^2y_3,y_1^2y_3,y_2y_3)\ =$$
$$\sum_{\gamma}e_{\gamma}(y_1^2y_2,y_2^3y_3,y_1y_2y_3,y_1^3y_2^2y_3,y_1^2y_3,y_2y_3,y_1^5y_2^3,y_1^4y_2y_3,$$
$$y_1^2y_2^2y_3,y_1^3y_2^5y_3^2,y_2^4y_3^2, y_1y_2^4y_3,y_1^4y_2^3y_3^2,y_1^3y_2y_3^2,y_1^2y_2^2y_3),$$
where $\gamma=
(\gamma_{10},\gamma_{20},\gamma_{30},\gamma_{01},\gamma_{02},\gamma_{03},\gamma_{11},\gamma_{12},
\gamma_{13},\gamma_{21},\gamma_{22},\gamma_{23},\gamma_{31},\gamma_{32},\gamma_{33})\in\mathbb{N}^{15}$ is such that $|\gamma|\leq 4\ $ and
$$ \gamma_{10}+\gamma_{11}+\gamma_{12}+\gamma_{13}=1, \ \ \ \
\gamma_{20}+\gamma_{21}+\gamma_{22}+\gamma_{23}=1, \ \ \ \
\gamma_{30}+\gamma_{31}+\gamma_{32}+\gamma_{33}=1,$$
$$\gamma_{01}+\gamma_{11}+\gamma_{21}+\gamma_{31}=1, \ \ \ \
\gamma_{02}+\gamma_{12}+\gamma_{22}+\gamma_{32}=2, \ \ \ \
\gamma_{03}+\gamma_{13}+\gamma_{23}+\gamma_{33}=1.$$
Looking at the solutions in $\mathbb{N}$ of  the system of  linear equations above we obtain that:
$$e_{(1,1,1)}(y_1^2y_2,y_2^3y_3,y_1y_2y_3)e_{(1,2,1)}(y_1^3y_2^2y_3,y_1^2y_3,y_2y_3)\ =$$

$$\begin{array}{lccc}
e_{(1,1,1,1)}(y_2y_3,y_1^4y_2y_3,y_2^4y_3^2,y_1y_2y_3)& +& e_{(1,1,1,1)}(y_2y_3,y_1^4y_2y_3,y_1^3y_2^5y_3^2,y_1^3y_2y_3^2) &+\\
e_{(1,1,1,1)}(y_2y_3,y_1^5y_2^3,y_2^4y_3^2,y_1^3y_2y_3^2)& + &e_{(1,1,1,1)}(y_1^2y_3,y_1^2y_2^2y_3,y_2^4y_3^2,y_1^4y_2^3y_3^2)& +\\
e_{(1,1,1,1)}(y_1^2y_3,y_1^2y_2^2y_3,y_1^3y_2^5y_3^2,y_1^3y_2y_3^2)& +&  e_{(1,1,1,1)}(y_1^2y_3,y_1^4y_2y_3,y_1y_2^4y_3,y_1^4y_2^3y_3^2)& +\\
e_{(1,1,1,1)}(y_1^2y_3,y_1^4y_2y_3,y_1^3y_2^5y_3^2,y_1^2y_2^2y_3)& +&  e_{(1,1,1,1)}(y_1^2y_3,y_1^5y_2^3,y_1y_2^4y_3,y_1^3y_2y_3^2) &+\\
e_{(1,1,1,1)}(y_1^2y_3,y_1^5y_2^3,y_2^4y_3^2,y_1^2y_2^2y_3)& + & e_{(1,1,1,1)}(y_1^3y_2^2y_3,y_1^2y_2^2y_3,y_2^4y_3^2,y_1^3y_2y_3^2)& +\\
e_{(1,1,1,1)}(y_1^3y_2^2y_3,y_1^4y_2y_3,y_1y_2^3y_3^2,y_1^2y_2^2y_3)& + & e_{(1,1,1,1)}(y_1^3y_2^2y_3,y_1^4y_2y_3,y_2^3y_3^2,y_1^2y_2^2y_3).&\mbox{}
\end{array}$$}
\end{exa}

\section{Review of Deformation Quantization}\label{sec2}

In this section we review a few needed notions on deformation quantization.  We assume the reader to be somewhat familiar with Kontsevich's work $\cite{Kon}$, although that level of generality is not necessary to understand the applications to the quantization of canonical phase space. A  Poisson bracket \cite{PS, Izu} on a smooth manifold $M$ is a $\mathbb{R}$-bilinear antisymmetric map $$\{\ , \ \}:C^{\infty}(M)\times C^{\infty}(M) \ \longrightarrow \ C^{\infty}(M),$$ where
$C^{\infty}(M)$ is the space of real-valued smooth functions on $M$, and for $f,g,h\in C^{\infty}(M)$ the following identities hold:
$$ \{f,gh\}\ = \ \{f,g\}h\ +\ g\{f,h\} \ \ \ \ \  \mbox{and} \ \ \ \ \  \{f, \{g,h\}\}\ = \ \{\{f,g\},h\}\ +\ \{g, \{f,h\}\} .$$
A manifold equipped with a Poisson bracket is called a Poisson manifold.  The Poisson bracket $\{\ , \ \}$ is determined by an
antisymmetric bilinear form $\alpha$ on $T^{\ast}M$, i.e. by the Poisson bi-vector $\alpha\in \bigwedge^2T M$ given in
local coordinates $(x_1,x_2,\cdots,x_d)$ on $M$ by $$\alpha_{ij}=\{x_i,x_j\}.$$ The bi-vector $\alpha$ determines the Poisson bracket as follows
$$\{f,g\}\ = \ \alpha (df,dg) \ = \ \sum_{i,j\in [d]}\alpha_{ij}\frac{\partial f}{\partial x_i}\frac{\partial g}{\partial x_j},
\ \ \ \ \ \mbox{for} \ \  f,g\in C^{\infty}(M).$$
If the Poisson bi-vector $\alpha_{ij}$ is non-degenerated (i.e. $\mathrm{det}(\alpha_{i,j}) \neq 0$)  the Poisson manifold $M$ is called symplectic.

\begin{exa}{\em
The space $\mathbb{R}^{2d}$  is a symplectic Poisson manifold with Poisson bracket given in the linear coordinates $(x_1,...,x_d,y_1,...,y_d)$ by
$$\{f,g\} \ = \
\sum_{i=1}^d\left(\frac{\partial f}{\partial x_i} \frac{\partial g}{\partial y_i}\ - \ \frac{\partial f}{\partial y_i} \frac{\partial g}{\partial x_i}\right),
\ \ \ \ \ \mbox{for} \ \ f,g \in C^{\infty}(\mathbb{R}^{2d}).$$
Equivalently, the Poisson bracket $\{\ , \ \}$ on $C^{\infty}(\mathbb{R}^{2d})$  is determined by the identities:
$$\{x_i,x_j\}=0,\ \ \ \ \ \ \{y_i,y_j\}=0,\ \ \ \ \ \ \mbox{and} \ \ \ \ \ \ \{x_i,y_j\}=\delta_{ij}, \ \ \ \ \mbox{for} \ \ \  i,j\in[d].$$ This example is the so-called canonical phase space with $n$ degrees of freedom.}
\end{exa}

\begin{exa} {\em Let $({\mathfrak{g}},[ \mbox{ },\mbox{ } ])$ be a Lie algebra over $\mathbb{R}$ of dimension $d$.  The dual vector space ${\mathfrak{g}}^{\ast}$ is a Poisson manifold with Poisson bracket given on $f,g\in C^{\infty}({\mathfrak{g}}^{\ast})$  by
$$\{f,g\}(\alpha)\ = \ \langle \alpha,[d_{\alpha}f,d_{\alpha}g]\rangle,$$
where  $\alpha\in{\mathfrak{g}}^{\ast},$  and the differentials $d_{\alpha}f$ and $d_{\alpha}g$ are regarded as elements of ${\mathfrak{g}}$ via the identifications $T^{\ast}_\alpha{\mathfrak{g}}^{\ast}={\mathfrak{g}}^{\ast \ast}={\mathfrak{g}}$. Choose a linear basis $\ e_1,...,e_d \ $ for $\ {\mathfrak{g}}$.
The structural coefficients $c_{ij}^k$ of ${\mathfrak{g}}$ are given, for $i,j,k \in[d]$, by
$$[e_i,e_j]\ = \ \sum_{k=1}^d c_{ij}^k e_k.$$ Let $(x_1,...,x_d)$ be the linear system of coordinates on ${\mathfrak{g}}^{\ast}$ relative to the basis $e_1,...,e_d$ of $\mathfrak{g}$. The Poisson bracket is determined by continuity and the identities
$$\{x_i,x_j\}\ = \ \sum_{k=1}^d  c_{ij}^k x_k.$$}
\end{exa}

\

A formal deformation, or deformation quantization, of a Poisson manifold $M$
is an associative product,
called the star product,  $$\star: C^{\infty}(M)
[[\hbar]]\otimes_{\mathbb{R}[[\hbar]]} C^{\infty}(M)[[\hbar]]\ \longrightarrow \ C^{\infty}(M)[[\hbar]]$$
defined on the space $C^{\infty}(M)[[\hbar]]$ of formal power series in $\hbar$ with coefficients in $C^{\infty}(M)$ such that the following conditions hold for  $f,g\in C^{\infty}(M) $:

\begin{itemize}

\item $f\star g\ = \ \displaystyle{\sum_{n=0}^{\infty}B_n(f,g)\hbar^{n}}, \ $ where the maps
$$B_n(\ , \ ): C^{\infty}(M)\times  C^{\infty}(M)\ \longrightarrow \ C^{\infty}(M)$$ are bi-differential operators.

\item{$f\star g \ = \ fg\ + \ \frac{1}{2}\{f,g\}\hbar\ + \ O(\hbar^{2})$, where $O(\hbar^{2})$
stand for terms of order $2$ and higher in  the variable $\hbar$.}
\end{itemize}

Kontsevich in $\cite{Kon}$ constructed a  $\star$-product
for any finite dimensional Poisson manifold. For linear Poisson manifolds the Kontsevich $\star$-product goes as follows. Fix a Poisson manifold $(\mathbb{R}^{d},\alpha)$,
the Kontsevich $\star$-product is given on $f,g\in C^{\infty}(M) $ by
$$f\star g\ = \ \sum_{n=0}^{\infty} B_n(f,g) \frac{\hbar^{n}}{n!} \ = \
\sum_{n=0}^{\infty}\left( \sum_{\Gamma \in \mathbb{G}_n}
\omega_\Gamma B_{\Gamma}(f,g) \right)\frac{\hbar^{n}}{n!}, \ \ \ \mbox{where:}$$
\begin{itemize}
\item{$\mathbb{G}_n$ is a collection of graphs, called admissible graphs, each  with $2n$ edges.  }

\item{For each graph $\Gamma\in \mathbb{G}_n$, the constant $\omega_{\Gamma}\in\mathbb{R}$ is independent of $d$ and $\alpha$, and its computed trough an integral in an appropriated configuration space.
    }

\item{$ B_{\Gamma}(\ , \ ): C^{\infty}(\mathbb{R}^{d})\times  C^{\infty}(\mathbb{R}^{d})\ \longrightarrow \ C^{\infty}(\mathbb{R}^{d})$ is a bi-differential operator associated to the graph $\Gamma\in \mathbb{G}_n$ and the Poisson bi-vector $\alpha$. The definition of the operators
    $B_{\Gamma}(\ , \ )$ is quite explicit and fairly combinatorial in nature.}

\end{itemize}

\begin{rem}{\em Kontsevich himself have highlighted the fact that explicitly computing the integrals defining the constants $\omega_{\Gamma}$ is a daunting task currently beyond reach. One can however use the symbols $\omega_{\Gamma}$ as variables, and they will defined a deformation quantization (with an extended ring of constants) as soon as this variables satisfy a certain system of quadratic equations \cite{DP}.
}
\end{rem}

We are going to use the Kontsevich $\star$-product in a slightly modified form.  $$\mbox{Let}\ \  \mathbb{G}= \bigsqcup_{n=0}^{\infty}\mathbb{G}_n  \ \  \mbox{and for} \ \ \Gamma \in \mathbb{G}, \   \mbox{set}
\ \overline{\Gamma}=n \  \ \mbox{if and only if} \ \ \Gamma \in \mathbb{G}_n.$$
With this notation the Kontsevich $\star$-product is given on functions  $f,g \in C^{\infty}(\mathbb{R}^{d})$ by
$$f\star g \ = \ \sum_{\Gamma \in \mathbb{G}}\frac{\omega_{\Gamma}}{\overline{\Gamma}!} B_{\Gamma}(f,g)\hbar^{\overline{\Gamma}} .$$

\begin{rem}{\em The Kontsevich $\star$-product is defined over $\mathbb{R} $ since  $c_{\Gamma} \in \mathbb{R}$. If $\alpha$ is a regular Poisson bi-vector, i.e. the entries $\alpha_{ij}$ of the Poisson bi-vector are polynomial functions,  then the $\star$-product on
$C^{\infty}(\mathbb{R}^{d})$ restricts to a well-defined $\star$-product on the space  $\mathbb{R}[x_1,...,x_d]$ of polynomial functions on $\mathbb{R}^{d}$. We are interested in the quantization of symmetric polynomial functions, thus we assume that $\alpha$ is regular Poisson bi-vector and work with quantum algebra $(\mathbb{R}[x_1,...,x_d], \star)$.
}
\end{rem}

\section{Quantum Symmetric Functions}\label{sec3}

Let $S_n$ be the symmetric group on $n$ letters. For each subgroup $K\subseteq S_n$, consider the Polya functor $\mathrm{P}_K:\mathbb{R}\mbox{-} \alg \ \longrightarrow \ \mathbb{R}\mbox{-}\alg$  from the category of
associative $\mathbb{R}$-algebras to itself, defined on objects as
follows \cite{DP}. Let $A$ be a $\mathbb{R}$-algebra, the underlying vector space of $\mathrm{P}_K A$ is given by
$$\mathrm{P}_K A \ = \ (A^{\otimes n})_K\ = \ A^{\otimes n}\ / \ \langle \ a_1\otimes \dots \otimes
a_n \ - \ a_{\sigma 1}\otimes \dots \otimes a_{\sigma n}\  \ \ |\ \ \  a_i\in A, \ \sigma \in
K \ \rangle .$$
Elements of $\mathrm{P}_K A$ are written as $\ \overline{a_1\otimes \cdots \otimes a_n}. \ $
For $a_{ij}\in A$, the following identity  determines the product on $\mathrm{P}_K A$:
$$|K|^{m-1}\prod_{i=1}^{m}\left(\overline{\bigotimes_{j=1}^{n}
a_{ij}}\right)  \ \ = \ \sum_{\sigma \in \{1\} \times
K^{m-1}}\overline{\bigotimes_{j=1}^{n}\left( \prod_{i=1}^{m} a_{i
\sigma^{-1}_i(j)}\right)}.$$
The Polya functor $\mathrm{P}_K$ is also known as the co-invariants functor. The invariants functor
$$\begin{array}{cccc}
\mathrm{I}^K:& \mathbb{R}\mbox{-}
\alg &\longrightarrow &\mathbb{R}\mbox{-}
\alg\\
\mbox{}& A&\longrightarrow & (A^{\otimes
n})^{K}\\
\end{array}
$$ is given on objects by $$\ (A^{\otimes n})^{K}\ = \ \{\ a\in A^{\otimes n}\ \ | \ \  g  a=a\ \ \ \mbox{for} \ \ \ g\in K \ \}. \  $$
The product on $(A^{\otimes n})^{K}$ comes from  the inclusion $(A^{\otimes n})^K\subset A^{\otimes n}$. \\

The functors $\ \mathrm{I}^K \ $ and $\ \mathrm{P}_K \ $ are naturally isomorphic to each other \cite{DP}.\\

Suppose a finite group $K$ acts on a Poisson manifold $M$, and that the induced action of $K$ on  $(C^{\infty}(M)[[\hbar]] ,\star)$ is by algebra automorphisms,  then we define the algebra of  quantum $K$-symmetric functions on $M$ as
$$\  (C^{\infty}(M)[[\hbar]] ,\star)_K \ \simeq \ (C^{\infty}(M)[[\hbar]],\star)^{K}.$$

Let $(\mathbb{R}^d, \alpha)$ be a regular Poisson manifold. The Cartesian product of Poisson manifolds is naturally endowed with the structure of a Poisson manifold, thus we get a regular Poisson manifold structure on $(\mathbb{R}^d)^n$. We use the following coordinates on the  $n$-fold Cartesian product of $\mathbb{R}^d$ with itself: $$(\mathbb{R}^d)^n\ = \ \{\ (x_1,...,x_n)\ \ | \ \ x_i=(x_{i1},...,x_{id})\in \mathbb{R}^d ,\ \ x_{ij}\in\mathbb{R},\ \ (i,j)\in[n]\times[d]\ \}.$$
The ring of regular functions on $(\mathbb{R}^d)^n$ is the ring of polynomials  on $dn$ commutative variables:
$$\mathbb{R}[(\mathbb{R}^d)^n] \ = \ \mathbb{R}[x_{11},...,x_{1d},......,x_{n1},...,x_{nd}].$$
Consider another set of   commutative variables $y_1,...,y_d$. Recall from Section \ref{4} that
for $f\in \mathbb{R}[y_{1},...,y_{d}]$ and $i\in[n]$ we set $f(i)=f(x_{i1},...,x_{id})\in \mathbb{R}[x_{i1},...,x_{id}] \subseteq \mathbb{R}[(\mathbb{R}^d)^n].$\\

The Poisson bracket on $(\mathbb{R}^d)^n$ is determined by the following identities
$$\{x_{ki},x_{lj}\}\ = \ \delta_{kl}\alpha_{ij}(k), \ \ \ \ \ \mbox{for} \ \  i,j \in [d] \ \ \mbox{and} \ \ k,l \in [n]$$ where the coordinates $\alpha_{ij}$ of the Poisson bivector $\alpha=\sum \alpha_{ij}\partial_i\wedge \partial_j$ are regarded as polynomials in $\mathbb{R}[y_{1},...,y_{n}]$.  The Poisson bracket on $(\mathbb{R}^d)^n$ is $S_n$-invariant, indeed for $\sigma\in S_n$ we have that:
$$\{x_{\sigma k i},x_{\sigma l j}\}\ = \ \delta_{\sigma k,\sigma l}\alpha_{ij}(\sigma k)\ = \ \sigma(\delta_{kl}\alpha_{ij}(k)).$$

Next results  \cite{DP} provide a natural construction of groups acting as algebra automorphisms on
the algebras $(\mathbb{R}[\mathbb{R}^{d}][[\hbar]],\star)$.

\begin{thm} {\em
Let $(\mathbb{R}^{d},\{ \ , \ \} )$ be a regular Poisson manifold, and $K$ be a subgroup of $S_d$  such that the Poisson bracket
$\{\mbox{ },\mbox{ }\}$ is $K$-equivariant. Then the action of $K$ on
$\ (\mathbb{R}[\mathbb{R}^{d}][[\hbar]],\star) \ $ is by algebra automorphisms.}
\end{thm}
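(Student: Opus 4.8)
The plan is to reduce everything to the covariance of the individual Kontsevich bidifferential operators. Since $\sigma \in K \subseteq S_d$ already acts as an automorphism of the commutative algebra $\mathbb{R}[\mathbb{R}^d]$ and extends $\mathbb{R}[[\hbar]]$-linearly fixing $\hbar$, and since $f \star g = \sum_{\Gamma \in \mathbb{G}} \frac{\omega_\Gamma}{\overline{\Gamma}!} B_\Gamma(f,g)\hbar^{\overline{\Gamma}}$ with the weights $\omega_\Gamma$ real constants that do not depend on the coordinates, it suffices to prove that $\sigma(B_\Gamma(f,g)) = B_\Gamma(\sigma f, \sigma g)$ for every admissible graph $\Gamma$ and all $f,g \in \mathbb{R}[\mathbb{R}^d]$. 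Summing over $\Gamma$ then gives $\sigma(f \star g) = \sigma(f)\star\sigma(g)$, and since $\sigma$ is bijective with inverse $\sigma^{-1}\in K$ this exhibits it as an algebra automorphism of $(\mathbb{R}[\mathbb{R}^d][[\hbar]],\star)$.

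First I would write out the explicit combinatorial definition of $B_\Gamma$. For $\Gamma\in\mathbb{G}_n$ with aerial vertices $1,\dots,n$, each carrying a copy of the bivector and emitting two edges, and two ground vertices $L,R$ carrying $f$ and $g$, the operator is a finite sum over all colorings $I\colon E_\Gamma\to[d]$ of the $2n$ edges: each aerial vertex $k$ contributes the component $\alpha_{I(e_k^1)I(e_k^2)}$, and each edge $e$ contributes a derivative $\partial_{I(e)}$ acting on the object sitting at its head. Thus $B_\Gamma(f,g)$ is a contraction of $\alpha$ and its derivatives against the coordinate derivatives of $f$ and $g$.

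Next I would record the two transformation rules induced by $\sigma$. Because $\sigma$ permutes coordinates linearly, with $\sigma(x_i)=x_{\sigma(i)}$, the chain rule gives $\sigma(\partial_i h)=\partial_{\sigma(i)}(\sigma h)$ for every function $h$, with no Jacobian factors. The hypothesis that the bracket is $K$-equivariant, $\{\sigma x_i,\sigma x_j\}=\sigma\{x_i,x_j\}$, reads at the level of components as $\sigma(\alpha_{ij})=\alpha_{\sigma(i)\sigma(j)}$, since $\alpha_{ij}=\{x_i,x_j\}$. Applying $\sigma$ to each factor of the summand of $B_\Gamma(f,g)$ and pushing it through via these two rules replaces every color $I(e)$ by $\sigma(I(e))$ throughout. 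Re-indexing the sum by the substitution $J=\sigma\circ I$, which is a bijection of the set of colorings because $\sigma$ is a bijection of $[d]$, transforms the summand into exactly that of $B_\Gamma(\sigma f,\sigma g)$, giving the desired identity $\sigma(B_\Gamma(f,g))=B_\Gamma(\sigma f,\sigma g)$.

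The only delicate point is this last simultaneous relabeling: one must check that every local factor — the bivector components at the aerial vertices and the derivatives attached to all edges — transforms in lockstep so that after the substitution $J=\sigma\circ I$ no residual dependence on $\sigma$ survives. This is precisely where both hypotheses enter: linearity of the permutation action guarantees the clean derivative rule $\sigma(\partial_i h)=\partial_{\sigma(i)}(\sigma h)$, and $K$-equivariance guarantees that the permuted bivector $\sigma(\alpha)$ coincides with $\alpha$ after relabeling. I would sanity-check the index conventions on the first order term $B_1(f,g)=\sum_{i,j}\alpha_{ij}\,\partial_i f\,\partial_j g$, for which the claim reduces to the equivariance of the Poisson bracket itself, namely $\sigma\{f,g\}=\{\sigma f,\sigma g\}$.
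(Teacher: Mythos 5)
Your proposal is correct, but there is nothing in the paper to compare it against line by line: the paper does not prove this theorem, it quotes it from the authors' earlier work \cite{DP} (the sentence introducing it reads ``Next results \cite{DP} provide a natural construction...''), and no argument is given in the text. Your proof supplies essentially the standard argument, which is also the one underlying the cited reference: reduce to the graph-level identity $\sigma(B_\Gamma(f,g))=B_\Gamma(\sigma f,\sigma g)$ --- legitimate because $\sigma$ fixes $\hbar$ and acts coefficient-wise, the weights $\omega_\Gamma$ are real constants independent of the coordinates and of $\alpha$, and each power of $\hbar$ involves only the finite collection $\mathbb{G}_n$ --- and then verify that identity by pushing $\sigma$ through every local factor of each colored-graph summand using the two rules $\sigma(\partial_i h)=\partial_{\sigma(i)}(\sigma h)$ and $\sigma(\alpha_{ij})=\alpha_{\sigma(i)\sigma(j)}$, followed by re-indexing the colorings via $J=\sigma\circ I$. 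Your closing remark correctly isolates where the hypotheses enter: the derivative rule holds with no Jacobian factors only because $K\subseteq S_d$ acts by constant permutation matrices (Kontsevich's formula is covariant under affine, not general, changes of coordinates), and the bivector rule is exactly the $K$-equivariance assumption, so neither hypothesis is idle. The one point you could make explicit is that regularity of $\alpha$ is what guarantees each $B_\Gamma$ maps polynomials to polynomials, so the whole computation takes place inside $\mathbb{R}[\mathbb{R}^{d}][[\hbar]]$ rather than $C^{\infty}(\mathbb{R}^{d})[[\hbar]]$; this is needed for the statement as phrased, though it plays no role in the equivariance argument itself.
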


\begin{cor} {\em
Let $(\mathbb{R}^{d},\{ \ , \ \} )$ be a regular Poisson manifold and consider a subgroup $K\subseteq S_n$.
Then  $K$ acts  by algebra automorphisms on  $(\mathbb{R}[(\mathbb{R}^d)^{n}][[\hbar]],\star)$.}
\end{cor}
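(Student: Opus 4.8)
The plan is to deduce the Corollary from the preceding Theorem by regarding the product Poisson manifold $(\mathbb{R}^d)^n$ as a single regular Poisson manifold $\mathbb{R}^{dn}$, and by realizing the subgroup $K\subseteq S_n$ as a group of coordinate permutations of $\mathbb{R}^{dn}$. Once the hypotheses of the Theorem (regularity of the bracket and $K$-equivariance) are checked in this reinterpreted setting, the conclusion is immediate, since the Theorem already encodes the naturality of the Kontsevich $\star$-product with respect to coordinate-permuting Poisson automorphisms.

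First I would fix the identification $(\mathbb{R}^d)^n=\mathbb{R}^{dn}$ with coordinates $x_{ki}$ for $(k,i)\in[n]\times[d]$, so that $\mathbb{R}[(\mathbb{R}^d)^n]=\mathbb{R}[x_{11},\dots,x_{nd}]$, and the product Poisson structure is $\{x_{ki},x_{lj}\}=\delta_{kl}\alpha_{ij}(k)$. Since $\alpha$ is a regular Poisson bivector on $\mathbb{R}^d$, each $\alpha_{ij}$ is a polynomial in $y_1,\dots,y_d$, whence $\alpha_{ij}(k)$ is a polynomial in $x_{k1},\dots,x_{kd}$; therefore the bracket of any two coordinate functions is polynomial and $(\mathbb{R}^{dn},\{\ ,\ \})$ is again a regular Poisson manifold to which the Theorem applies, now with $d$ replaced by $dn$. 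Next I would embed $K$: the action of $S_n$ on $(\mathbb{R}^d)^n$ permutes the $n$ blocks of coordinates, sending $x_{ki}$ to $x_{\sigma k,i}$, and this is precisely a permutation of the $dn$ coordinates. It thus defines an injective homomorphism $S_n\hookrightarrow S_{dn}$ whose restriction realizes $K$ as a subgroup of $S_{dn}$, so that the hypothesis $K\subseteq S_{dn}$ of the Theorem is met.

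It then remains to verify that the Poisson bracket on $\mathbb{R}^{dn}$ is $K$-equivariant for this embedded action, which is exactly the computation recorded just before the Corollary: for $\sigma\in K$,
$$\{x_{\sigma ki},x_{\sigma lj}\}\ = \ \delta_{\sigma k,\sigma l}\alpha_{ij}(\sigma k)\ = \ \sigma\bigl(\delta_{kl}\alpha_{ij}(k)\bigr)\ = \ \sigma\{x_{ki},x_{lj}\},$$
using that $\sigma$ preserves equality of block indices ($\delta_{\sigma k,\sigma l}=\delta_{kl}$) and sends the polynomial $\alpha_{ij}(k)$ in the block-$k$ variables to the same polynomial $\alpha_{ij}(\sigma k)$ in the block-$\sigma k$ variables. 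By bilinearity and the Leibniz rule the equivariance on generators propagates to all of $\mathbb{R}[(\mathbb{R}^d)^n]$, so the bracket is $K$-equivariant. Having checked regularity and $K$-equivariance, the Theorem applied with ground dimension $dn$ and subgroup $K\subseteq S_{dn}$ yields that $K$ acts by algebra automorphisms on $(\mathbb{R}[\mathbb{R}^{dn}][[\hbar]],\star)=(\mathbb{R}[(\mathbb{R}^d)^{n}][[\hbar]],\star)$, which is the assertion.

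The only nontrivial input is the $K$-equivariance of the product bracket; everything else is bookkeeping about the identifications $(\mathbb{R}^d)^n=\mathbb{R}^{dn}$ and $K\subseteq S_n\hookrightarrow S_{dn}$. Since that equivariance is precisely the identity already displayed in the text, I expect no genuine obstacle: the proof reduces to assembling these observations and invoking the Theorem.
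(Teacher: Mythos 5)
Your proposal is correct and matches the paper's intended deduction: the paper sets up exactly these ingredients immediately before the Corollary (the product bracket $\{x_{ki},x_{lj}\}=\delta_{kl}\alpha_{ij}(k)$, its regularity, and the displayed $S_n$-invariance computation), so that the Corollary follows from the Theorem by viewing $(\mathbb{R}^d)^n$ as $\mathbb{R}^{dn}$ with $K\subseteq S_n\hookrightarrow S_{dn}$ acting by block permutations. Your write-up simply makes explicit the bookkeeping the paper leaves implicit.
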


\begin{defi}\label{qsf}{\em Let $(\mathbb{R}^{d},\{ \ , \ \} )$ be a regular Poisson manifold.
The algebra of quantum
symmetric functions on $(\mathbb{R}^{d})^{n}$ is given by
$$\  (\mathbb{R}[(\mathbb{R}^d)^{n}][[\hbar]],\star)_{S_{n}}\  \simeq\
(\mathbb{R}[(\mathbb{R}^d)^{n}][[\hbar]],\star)^{S_{n}}.$$

}
\end{defi}

\begin{exa} {\em Consider $\mathbb{R}^{2d}$ with its canonical symplectic Poisson structure, then $(\mathbb{R}^{2d})^n$ is also a symplectic Poisson manifold. Choose coordinates on $(\mathbb{R}^{2d})^n$  as follows:
$$(\mathbb{R}^{2d})^n\ = \ \left\{\ (x_1,y_1,...,x_n,y_n)\ \mid \ x_i=(x_{i1},...,x_{id}), \ \  y_i=(y_{i1},...,y_{id}),\ \ x_{ij}, \ y_{ij}\in\mathbb{R}\ \right\}.$$ The $S_n$-invariant Poisson bracket on $(\mathbb{R}^{2d})^n$ is given for $i,j \in [d]$ and $k,l \in [n]$ by
$$\{x_{ki},x_{lj}\}=0,\ \ \ \  \ \ \ \{y_{ki},y_{lj}\}=0 \ \ \ \ \ \  \ \mbox{and} \ \ \ \ \  \ \ \{x_{ki},y_{lj}\}=\delta_{kl}\delta_{ij}.$$}
\end{exa}

\begin{exa} {\em Let $\mathfrak{g}$ be a $d$-dimensional Lie algebra over $\mathbb{R}$ and $\mathfrak{g}^{\ast}$ its dual vector space. Then $\mathfrak{g}^{\ast}$ is a Poisson manifold, and therefore $(\mathfrak{g}^\ast)^n$ is also a Poisson manifold. The $S_n$-invariant Poisson bracket on $(\mathfrak{g}^\ast)^n$ is given, for $i,j \in [d]$ and $k,l \in [n],$ by
$$\{x_{ki},x_{lj}\}\ = \ \delta_{kl}\sum_{m=0}^d c^m_{ij}x_{km}$$ where $c^m_{ij}$ are the structural coefficients of $\mathfrak{g}$.}
\end{exa}

Specializing Definition \ref{qsf} we obtain the following natural notions.
The algebra of quantum symmetric functions on $ (\mathbb{R}^{2d})^{n}$ is given by
$$(\mathbb{R}[(\mathbb{R}^{2d})^{n}][[\hbar]],\star)_{S_{n}}\ \ \simeq\ \
(\mathbb{R}[(\mathbb{R}^{2d})^{n}][[\hbar]],\star)^{S_{n}}.$$
More generally, the algebra of quantum symmetric functions on $(\mathfrak{g}^\ast)^n$ is given by
$$(\mathbb{R}[(\mathfrak{g}^\ast)^n][[\hbar]],\star)_{S_{n}}\ \ \simeq\ \
(\mathbb{R}[(\mathfrak{g}^\ast)^n][[\hbar]],\star)^{S_{n}}.$$

\section{$\star$-Product of Multi-Symmetric Functions}\label{5}

We are ready to state and proof the main result of this work which extends Theorem  \ref{VACC} from the classical to the quantum case: we
provide an explicit formula for the $\star$-product of multi-symmetric functions. \\

Recall that the $\star$-product can be expanded as a formal power series en $\hbar$ as:
$$f\star g\ = \ \sum_{n=0}^{\infty} B_n(f,g) \frac{\hbar^{n}}{n!} .$$

\begin{thm}\label{prin}{\em Let $(\mathbb{R}^{d},\{ \ , \ \} )$ be a regular Poisson manifold and $(\mathbb{R}[(\mathbb{R}^{d})^{n}][[\hbar]],\star)^{S_{n}}$
be the algebra of quantum symmetric functions on $(\mathbb{R}^{d})^{n}.$ Fix $a,b, n \in\mathbb{N}^+$, $p\in R[y_1,...,y_d]^a$, and $q\in R[y_1,...,y_d]^b$.  Let $\alpha\in \mathbb{N}^a$ and  $\beta\in \mathbb{N}^b$ be such that $|\alpha|, |\beta|\leq n$. The $\star$-product of $e_{\alpha}(p)$ and $e_{\beta}(q)$ is given by:
$$e_{\alpha}(p)\star e_{\beta}(q)\ = \ \sum_{m=0}^\infty\left(
\sum_{\gamma\in \mathrm{Q}(\alpha,\beta,n,m)}e_{\gamma}(B(p,q))\right)\hbar^m, \ \ \ \ \ \mbox{where:}$$
\begin{itemize}

\item  $B(p,q)=(p,q,...,B_k(p,q),...)\ $ and
$$B_k(p,q)\ = \ (B_k(p_1,q_1),..., B_k(p_1,q_b),......, B_k(p_a,q_1), ..., B_k(p_a,q_b)).$$

\item $\mathrm{Q}(\alpha,\beta,n,m)$ is the subset of   $\mathrm{Map}([0,a]\times[0,b] \times \mathbb{N}, \mathbb{N})$ consisting of cubical matrices $$\gamma: [0,a]\times[0,b] \times \mathbb{N} \ \longrightarrow \ \mathbb{N}\ \ \ \ \ \mbox{such that:}$$

\begin{itemize}
\item  $\gamma_{00k}=0 \ $ for $\ k\geq 0; \ \ \ $ if either $\ l=0\ $ or $\ r=0, \ $ then $\ \gamma_{lrk}=0\ $ for $\ k\geq 1$.

\item $|\gamma| = \sum\limits_{l=0}^{a}\sum\limits_{r=0}^{b}\sum\limits_{k=0}^{\infty}\gamma_{lrk} \leq n, \ \ \ \ \ \mbox{and}
\ \ \ \ \ \sum\limits_{l=0}^{a}\sum\limits_{r=0}^{b}\sum\limits_{k=0}^{\infty}k\gamma_{lrk}=m.$

\item $\sum\limits_{r=0}^{b}\sum\limits_{k=0}^{\infty}\gamma_{lrk}=\alpha_l \ \ \  \mbox{for}\ \ l\in [a],
\ \ \ \ \ \mbox{and} \ \ \ \ \ \sum\limits_{l=0}^{a}\sum\limits_{k=0}^{\infty}\gamma_{lrk}=\beta_r \ \ \ \mbox{for}\ \  j\in [b].$

\end{itemize}

\end{itemize}}
\end{thm}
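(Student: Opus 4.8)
The plan is to imitate the generating-function argument of Theorem \ref{VACC}, supplying the one essential new ingredient needed to cope with the noncommutativity of $\star$. First I would record the \emph{locality} of the star product on $(\mathbb{R}^d)^n$. Since the Poisson bracket is block diagonal, $\{x_{ki},x_{lj}\}=\delta_{kl}\alpha_{ij}(k)$, and Kontsevich's bidifferential operators $B_\Gamma$ are assembled out of the bivector, any $B_\Gamma$ attached to a graph with at least one edge annihilates a pair of arguments supported on distinct copies. Consequently, if $f$ depends only on the variables $x_{i\bullet}$ of the $i$-th copy and $g$ only on those of the $j$-th copy with $i\neq j$, then $f\star g=fg=gf$. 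In other words, the subalgebra generated by the copies is the $\star$-tensor product $\bigotimes_{i=1}^n(\mathbb{R}[x_i][[\hbar]],\star)$, and copy-localized factors multiply slotwise.

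The key consequence I would isolate is the identity
$$\Bigl(\prod_{i=1}^n A_i\Bigr)\star\Bigl(\prod_{i=1}^n B_i\Bigr)\ =\ \prod_{i=1}^n\bigl(A_i\star B_i\bigr),$$
valid whenever each $A_i$ and each $B_i$ is supported on the $i$-th copy (the central formal parameters $t_l,s_r,\hbar$ cause no trouble). This is precisely the step where the classical proof merely reorders a commutative product; here it is forced by the tensor-product structure above, and it is the main obstacle, since without locality one could not move the $B$-factors past the $A$-factors.

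Granting this, I would apply the identity to the two generating series of Definition \ref{vacdef}, taking $A_i=1+\sum_{l=1}^a p_l(i)t_l$ and $B_i=1+\sum_{r=1}^b q_r(i)s_r$, so that $\prod_i A_i=\sum_\alpha e_\alpha(p)t^\alpha$ and $\prod_i B_i=\sum_\beta e_\beta(q)s^\beta$. Expanding a single factor and using $1\star f=f\star 1=f$ gives
$$A_i\star B_i\ =\ 1+\sum_{l=1}^a p_l(i)t_l+\sum_{r=1}^b q_r(i)s_r+\sum_{l=1}^a\sum_{r=1}^b\bigl(p_l(i)\star q_r(i)\bigr)t_ls_r,$$
into which I substitute $p_l(i)\star q_r(i)=\sum_{k\ge0}B_k(p_l,q_r)(i)\hbar^k$. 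This exhibits $\prod_i(A_i\star B_i)$ as exactly the defining product for the elementary multi-symmetric functions in the enlarged tuple $B(p,q)=(p,q,\dots,B_k(p,q),\dots)$, with one slot $(l,r,k)$ per polynomial in the list and associated weight $t_ls_r\hbar^k$ for $l,r\ge1$.

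Finally I would read off coefficients. Writing the expansion as $\sum_\gamma e_\gamma(B(p,q))\,w^\gamma$ with $\gamma$ a cubical matrix on $[0,a]\times[0,b]\times\mathbb{N}$, the weight of slot $(l,r,k)$ is $t_ls_r\hbar^k$ for $l,r\ge1$, is $t_l$ for the border slot $(l,0,0)$, and is $s_r$ for $(0,r,0)$; the corner $(0,0,\cdot)$ carries the unit and contributes nothing, while the border slots carry no $\hbar$. Matching the total weight to $t^\alpha s^\beta\hbar^m$ then yields exactly the constraints defining $\mathrm{Q}(\alpha,\beta,n,m)$: the row sums equal $\alpha$, the column sums equal $\beta$, the $k$-weighted total equals $m$, $|\gamma|\le n$ so that $e_\gamma$ survives on $n$ points, $\gamma_{00k}=0$, and $\gamma_{lrk}=0$ for $k\ge1$ on the borders. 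Extracting the coefficient of $t^\alpha s^\beta$ produces the stated formula. The only delicate bookkeeping is the factorial normalization: one must fix the convention $p_l\star q_r=\sum_k B_k(p_l,q_r)\hbar^k$, absorbing the $1/k!$ of the Kontsevich expansion into $B_k$, so that no spurious factorials survive in $\mathrm{Q}(\alpha,\beta,n,m)$.
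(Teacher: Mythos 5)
Your proposal is correct and follows essentially the same generating-function argument as the paper's proof: expand both series, merge them into a single product over the $n$ copies, substitute $p_l(i)\star q_r(i)=\sum_k B_k(p_l,q_r)(i)\hbar^k$, and match monomials $t^\alpha s^\beta\hbar^m$ against $w^\gamma$ to extract the constraints defining $\mathrm{Q}(\alpha,\beta,n,m)$. If anything your write-up is more careful than the paper's, since you explicitly justify the one step the paper performs silently --- the locality identity $\bigl(\prod_{i} A_i\bigr)\star\bigl(\prod_{i} B_i\bigr)=\prod_{i}\bigl(A_i\star B_i\bigr)$ for factors supported on distinct copies, which follows from the block-diagonal Poisson structure --- and you also flag the normalization convention $p_l\star q_r=\sum_k B_k(p_l,q_r)\hbar^k$ needed so that no factorials appear in the final formula.
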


\begin{proof}
We have that
$$\sum_{|\alpha|, |\beta| \leq n}\sum_{m=0}^{\infty}B_m(e_{\alpha}(p), e_{\beta}(q))t^{\alpha}s^{\beta}\hbar^m \ = \
\sum_{|\alpha|, |\beta| \leq n}\left(e_{\alpha}(p)\star e_{\beta}(q)\right)t^{\alpha}s^{\beta} \ = $$
$$\left(\sum_{|\alpha| \leq n}e_{\alpha}(p)t^{\alpha}\right)\star \left(\sum_{|\beta| \leq n}e_{\beta}(q)s^{\beta}\right)\ = \
\prod_{i=1}^{n}\left(1\ + \ \sum_{l=1}^ap_l(i)t_l\right)\star  \prod_{i=1}^{n}\left(1\ + \ \sum_{r=1}^bq_r(i)s_r\right)\ = $$
$$\prod_{i=1}^{n}\left(1\ +\ \sum_{l=1}^ap_l(i)t_l\right)\star \left(1\ +\ \sum_{r=1}^bq_r(i)s_r\right)\ = $$
$$\prod_{i=1}^{n}\left(1\ +\ \sum_{l=1}^ap_l(i)t_l\ + \ \sum_{r=1}^bq_r(i)s_r \ + \ \sum_{l=1}^a \sum_{r=1}^b p_l(i)\star q_r(i) t_l s_r \right)\ = $$
$$\prod_{i=1}^{n}\left(1\ +\ \sum_{l=1}^ap_l(i)t_l \ + \ \sum_{r=1}^bq_r(i)s_r \ + \
\sum_{l=1}^a \sum_{r=1}^b \sum_{k=0}^{\infty} B_k(p_l(i), q_r(i))  t_l s_r \hbar^k \right)\ = $$
$$\prod_{i=1}^{n}\left(1\ +\ \sum_{l=1}^a p_l(i)w_{l00}\ + \ \sum_{r=1}^bq_r(i)w_{0r0}\ + \ \sum_{l=1}^a \sum_{r=1}^b \sum_{k=0}^{\infty}  B_k( p_l(i), q_r(i))w_{lrk}\right)\ = $$
$$\sum_{\gamma\in \mathrm{Q}(\alpha,\beta,n,m)}e_{\gamma}(B(p,q))w^{\gamma}, \ \ \ \ \ \  \ \  \mbox{where:}$$

$$w^{\gamma}\ = \  \prod\limits_{l=0}^{a}\prod\limits_{r=0}^{b}\prod\limits_{k=0}^{\infty}w_{lrk}^{\gamma_{lrk}}\
 = \ \prod\limits_{l=0}^{a}\prod\limits_{r=0}^{b}\prod\limits_{k=0}^{\infty}(t_ls_r\hbar^k)^{\gamma_{lrk}}\ = \ \prod\limits_{l=0}^{a}\prod\limits_{r=0}^{b}\prod\limits_{k=0}^{\infty}t_l^{\gamma_{lrk}}s_r^{\gamma_{lrk}}\hbar^{k\gamma_{lrk}},$$ and we are using the conventions
 $$t_0=s_0=1,\ \ \ \ \ \ \  \ w_{ruk}=t_rs_u\hbar^k\ \ \ \ \ \mbox{for}\ \ \ \ r,u,m\geq 0.$$
 For $w^{\gamma}$ to be equal to $t^{\alpha}s^{\beta}\hbar^m$ we must have

$$\left(\prod\limits_{l=0}^{a}t_l^{\alpha_l}\right)\left(\prod\limits_{r=0}^{b}s_r^{\beta_r}\right)\hbar^m \ = \ \prod\limits_{l=0}^{a}\prod\limits_{r=0}^{b}\prod\limits_{k=0}^{\infty}t_l^{\gamma_{lrk}}s_r^{\gamma_{lrk}}\hbar^{k\gamma_{lrk}} \ = $$

$$ \left ( \prod\limits_{l=0}^{a}\prod\limits_{r=0}^{b}\prod\limits_{k=0}^{\infty}t_l^{\gamma_{lrk}}\right)
 \left(   \prod\limits_{l=0}^{a}\prod\limits_{r=0}^{b}\prod\limits_{k=0}^{\infty}s_r^{\gamma_{lrk}}\right)
 \left( \prod\limits_{l=0}^{a}\prod\limits_{r=0}^{b}\prod\limits_{k=0}^{\infty}\hbar^{k\gamma_{lrk}}\right) = $$

$$\left( \prod\limits_{l=1}^{a}t_l^{\sum\limits_{r=0}^{b}\sum\limits_{k=0}^{\infty}\gamma_{lrk}}\right)
\left( \prod\limits_{r=1}^{b}s_r^{\sum\limits_{l=0}^{a}\sum\limits_{k=0}^{\infty}\gamma_{lrk}}\right)
\hbar^{\sum\limits_{l=0}^{a}\sum\limits_{r=0}^{b}\sum\limits_{k=0}^{\infty}k\gamma_{lrk}}.$$
and thus we conclude that
$$\sum\limits_{r=0}^{b}\sum\limits_{k=0}^{\infty}\gamma_{lrk}=\alpha_l \ \ \mbox{for}\ l\in [a], \ \ \  \ \ \
\sum\limits_{l=0}^{a}\sum\limits_{k=0}^{\infty}\gamma_{lrk}=\beta_r \ \  \mbox{for}\  j\in [b], \ \ \ \ \ \
\sum\limits_{l=0}^{a}\sum\limits_{r=0}^{b}\sum\limits_{k=0}^{\infty}k\gamma_{lrk}=m.$$

\end{proof}

\begin{cor}\em{ With the assumptions of Theorem \ref{prin}, the Poisson bracket of the multi-symmetric functions $e_{\alpha}(p)$ and  $e_{\beta}(q)$ is given by
$$\{e_{\alpha}(p), e_{\beta}(q)\}\ = \ 2\sum_{\gamma\in \mathrm{Q}(\alpha,\beta,n,1)}e_{\gamma}(B(p,q)).$$}
\end{cor}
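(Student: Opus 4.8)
The plan is to obtain the corollary simply by reading off the coefficient of $\hbar^1$ from the two available descriptions of the $\star$-product and comparing them.

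First I would recall the second axiom defining a deformation quantization, namely that for all $f,g \in \mathbb{R}[(\mathbb{R}^d)^n]$ one has $f\star g = fg + \frac{1}{2}\{f,g\}\hbar + O(\hbar^2)$. Specializing to $f = e_{\alpha}(p)$ and $g = e_{\beta}(q)$, this identifies the coefficient of $\hbar^1$ in $e_{\alpha}(p)\star e_{\beta}(q)$ as $\frac{1}{2}\{e_{\alpha}(p), e_{\beta}(q)\}$.

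Next I would invoke Theorem \ref{prin}, which furnishes the full expansion
$$e_{\alpha}(p)\star e_{\beta}(q)\ = \ \sum_{m=0}^\infty\left(\sum_{\gamma\in \mathrm{Q}(\alpha,\beta,n,m)}e_{\gamma}(B(p,q))\right)\hbar^m,$$
so that the coefficient of $\hbar^1$ equals $\sum_{\gamma\in \mathrm{Q}(\alpha,\beta,n,1)}e_{\gamma}(B(p,q))$. Since a formal power series in $\hbar$ determines its coefficients uniquely, the two expressions for the $\hbar^1$-coefficient must coincide, yielding $\frac{1}{2}\{e_{\alpha}(p), e_{\beta}(q)\} = \sum_{\gamma\in \mathrm{Q}(\alpha,\beta,n,1)}e_{\gamma}(B(p,q))$; multiplying through by $2$ produces the claimed identity.

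There is essentially no obstacle here: the statement is just the first-order part of Theorem \ref{prin}. The only point deserving attention is the provenance of the factor $2$, which is dictated by the normalization $\frac{1}{2}\{f,g\}$ in the axiom and not by the combinatorics of $\mathrm{Q}(\alpha,\beta,n,1)$. Indeed, the defining constraint $\sum_{l,r,k} k\gamma_{lrk} = 1$ at $m=1$ admits only configurations with a single entry $\gamma_{lr1}=1$ (necessarily with $l,r\geq 1$) and all higher layers $k\geq 2$ vanishing, so each surviving term $e_{\gamma}(B(p,q))$ carries exactly one factor of the bidifferential operator $B_1 = \frac{1}{2}\{\,\cdot\,,\,\cdot\,\}$. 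This internal $\frac{1}{2}$ together with the external $2$ is precisely what recovers the Poisson bracket with unit coefficient, which serves as a useful consistency check on the formula.
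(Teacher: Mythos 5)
Your proposal is correct and is essentially the paper's own argument: the paper proves the corollary by combining Theorem \ref{prin} with the identity $\{e_{\alpha}(p), e_{\beta}(q)\} = 2\,\frac{\partial}{\partial \hbar}\left(e_{\alpha}(p)\star e_{\beta}(q)\right)\big|_{\hbar=0}$, which is exactly your step of reading off the $\hbar^1$-coefficient and using the normalization $f\star g = fg + \frac{1}{2}\{f,g\}\hbar + O(\hbar^2)$. Your closing remark tracing the factor $2$ to this normalization (rather than to the combinatorics of $\mathrm{Q}(\alpha,\beta,n,1)$) is accurate and matches the paper's reasoning.
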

\begin{proof}Follows from Theorem \ref{prin}  and the identity
$$\{e_{\alpha}(p), e_{\beta}(q)\}\ = \ 2 \frac{\partial}{\partial \hbar} (e_{\alpha}(p)\star e_{\alpha}(q))\mid_{\hbar=0}.$$
\end{proof}

For our next result we regard  $(\mathbb{R}[(\mathbb{R}^{d})^{n}][[\hbar]],\star)^{S_{n}}$ as a topological algebra with topology induced by the inclusion $$(\mathbb{R}[(\mathbb{R}^{d})^{n}][[\hbar]],\star)^{S_{n}} \ \subseteq \
(\mathbb{R}[(\mathbb{R}^{d})^{n}][[\hbar]],\star),$$ where a fundamental system of neighborhoods of $0 \in
\mathbb{R}[(\mathbb{R}^{d})^{n}][[\hbar]]$ is given by the decreasing family of sub-algebras
$$\mathbb{R}[(\mathbb{R}^{d})^{n}][[\hbar]] \ \supseteq \  \hbar \mathbb{R}[(\mathbb{R}^{d})^{n}][[\hbar]] \ \supseteq ......... \ \supseteq \ \hbar^n\mathbb{R}[(\mathbb{R}^{d})^{n}][[\hbar]] \ \supseteq ..........$$

Recall from the introduction that the elementary multi-symmetric functions $e_k$, for  $k\in \mathbb{N}^d$ with $|k|\leq n,$ are defined by the identity
$$\prod_{i =1}^n(1+ x_{i1}t_1 + \cdots + x_{id}t_d)\ = \sum_{k \in  \mathbb{N}^d, \ |k| \leq n}e_kt^{k}.$$

Similarly, the homogeneous multi-symmetric functions $h_k$, for  $k=(k_1,...,k_d)\in \mathbb{N}^d,$ are defined by the identity
$$\prod_{i =1}^n\frac{1}{1- x_{i1}t_1 - \cdots - x_{id}t_d} \  = \ \sum_{k \in  \mathbb{N}^d}h_kt^{k}.$$

Let $\mathcal{M}_d$ be the set of (non-trivial) monomials in the variables $y_1,...,y_d$. The power sum symmetric function $e_1(m)$ is given, for $m \in \mathcal{M}_d$, by
$$e_1(m)\ = \ m(1) \ + \ \cdots \ + \ m(d).$$

\begin{thm}
{\em
The elementary  multi-symmetric functions $e_k$ for $|k|\leq n,$ the homogeneous multi-symmetric functions $h_k$ for $|k|\leq n,$ and the power sum  multi-symmetric functions $e_1(m)$ with $m \in \mathcal{M}_d$ a monomial of degree less than or equal to $n$,  together with $\hbar$ generate, respectively, the topological algebra $(\mathbb{R}[(\mathbb{R}^{d})^{n}][[\hbar]],\star)^{S_{n}}$.
}
\end{thm}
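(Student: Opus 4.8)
The plan is to decouple the quantum, topological content of the statement from the classical, commutative content, and to reduce the whole theorem to a single generation principle for deformations. First I would record two structural facts. Since $S_n$ acts on $(\mathbb{R}[(\mathbb{R}^d)^n][[\hbar]],\star)$ by $\star$-automorphisms, the invariant subalgebra is $\hbar$-adically closed and, as an $\mathbb{R}[[\hbar]]$-module, coincides with $A[[\hbar]]$, where $A=\mathbb{R}[(\mathbb{R}^d)^n]^{S_n}$ is the classical algebra of multi-symmetric functions; note that the three families all lie in $A$. Secondly, the deformation axiom $f\star g=fg+O(\hbar)$ together with associativity gives $g_1\star\cdots\star g_r\equiv g_1\cdots g_r\pmod{\hbar}$ for all $g_1,\dots,g_r\in A$. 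Hence it suffices to prove the principle: if a family $\mathcal F\subseteq A$ generates $A$ as a commutative $\mathbb{R}$-algebra, then $\mathcal F\cup\{\hbar\}$ topologically generates $(A[[\hbar]],\star)$.

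To prove this principle I would use $\hbar$-adic successive approximation. Let $B$ be the closed $\star$-subalgebra generated by $\mathcal F\cup\{\hbar\}$ and fix $f\in A[[\hbar]]$. I claim there are $Q_N\in B$ with $f\equiv Q_N\pmod{\hbar^{N+1}}$, constructed by induction on $N\ge 0$ (with $Q_{-1}=0$). Writing $f-Q_{N-1}=\hbar^{N}c+O(\hbar^{N+1})$ with $c\in A$, I express $c$ as a commutative polynomial in elements of $\mathcal F$ and let $R\in B$ be the same expression evaluated with $\star$; the congruence above gives $R\equiv c\pmod{\hbar}$. Since $\hbar$ is central, $\hbar^{N}\star R=\hbar^{N}R\in B$, so $Q_N:=Q_{N-1}+\hbar^{N}R$ lies in $B$ and satisfies $f\equiv Q_N\pmod{\hbar^{N+1}}$. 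Then $Q_N\to f$ in the $\hbar$-adic topology, and as $B$ is closed we get $f\in B$; thus $B=A[[\hbar]]$.

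It then remains to check that each of the three families generates $A$ classically. For the elementary functions $e_k$ with $|k|\le n$ this is the presentation of $A$ recalled in the Introduction. For the homogeneous functions, with $L_i=\sum_{j}x_{ij}t_j$ the generating-function identity $\big(\sum_k(-1)^{|k|}e_kt^k\big)\big(\sum_kh_kt^k\big)=1$ (immediate from $\prod_i(1-L_i)\cdot\prod_i(1-L_i)^{-1}=1$) gives $\sum_{a+b=k}(-1)^{|a|}e_ah_b=0$ for $k\neq 0$; isolating the extreme terms and using induction on $|k|$ expresses each $e_k$ with $|k|\le n$ as a polynomial in the $h_b$ with $|b|\le n$, and conversely, so the two families generate the same subalgebra. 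For the power sums I would pass to logarithms in characteristic zero: with $P_k=\sum_iL_i^{\,k}=\sum_{|\alpha|=k}\binom{k}{\alpha}e_1(y^\alpha)t^\alpha$ one has $\log\prod_i(1+L_i)=\sum_{k\ge 1}\frac{(-1)^{k-1}}{k}P_k$, whence each $e_k$ with $|k|\le n$ is a polynomial over $\mathbb{Q}$ in the $e_1(y^\alpha)$ with $|\alpha|\le n$, and conversely.

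The mechanism of the proof, the successive-approximation step, is routine; the substance is in the classical generation statements. The main obstacle I anticipate is the degree bookkeeping: one must verify that the recursions expressing $e_k$ through the $h_b$ and through the power sums never invoke generators of total degree exceeding $n$, and that the passage between power sums and elementary functions only requires dividing by positive integers and multinomial coefficients, which is precisely where characteristic zero is indispensable.
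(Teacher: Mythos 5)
Your proof is correct, and its core --- the $\hbar$-adic successive-approximation induction showing that any family generating the classical algebra $A=\mathbb{R}[(\mathbb{R}^d)^n]^{S_n}$, together with $\hbar$, topologically generates $(A[[\hbar]],\star)$ --- is exactly the paper's argument: the paper expands $f=\sum_k f_k\hbar^k$, writes $f_0$ as a polynomial in elementary multi-symmetric functions, replaces products by $\star$-products, subtracts, and iterates in powers of $\hbar$. Your version is cleaner in two respects: you isolate this induction as a general principle (thereby making precise the paper's remark that ``the same argument is applied in each case''), and you verify explicitly the points the paper leaves implicit --- that the $S_n$-invariants coincide with $A[[\hbar]]$ as an $\mathbb{R}[[\hbar]]$-module, that $\star$-monomials in classical elements agree with commutative monomials mod $\hbar$, and that closedness of the generated subalgebra lets one pass to the limit. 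The genuine divergence is in the classical input: the paper simply cites the literature (Briand, Dalbec, Fleischmann, Macdonald, Vaccarino) for the fact that each of the three families generates $A$, whereas you prove it --- the elementary case from the presentation recalled in the Introduction, the homogeneous case from the identity $\bigl(\sum_k(-1)^{|k|}e_kt^k\bigr)\bigl(\sum_k h_kt^k\bigr)=1$ together with induction on $|k|$, and the power-sum case by taking logarithms, which is precisely where characteristic zero enters. This buys self-containedness at the cost of length; the degree bookkeeping you flag ($|a|,|b|\le |k|\le n$ in the $e$--$h$ recursion, $|\alpha|\le n$ in the exponential expansion) does go through, so there is no gap.
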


\begin{proof}  It is  known \cite{Bri,Dal,Fles,MacD, Vac} that each of the aforementioned sets of multi-symmetric functions generate the algebra of classical multi-symmetric functions $\mathbb{R}[(\mathbb{R}^{d})^{n}]^{S_{n}}$. \\

To go the quantum case the same argument is applied in each case, so
we only consider the elementary symmetric functions. Take $\ f \in (\mathbb{R}[(\mathbb{R}^{d})^{n}][[\hbar]],\star)^{S_{n}} \ $ and expand it as formal power series
$$f \ = \  \sum_{k=0}^{\infty}f_k \hbar^k \ \ \ \ \ \mbox{with} \ \ \ \ \ f_k \in \mathbb{R}[(\mathbb{R}^{d})^{n}]^{S_{n}}.$$
We can write $f_0$ as a linear combination of a product of elementary symmetric functions. For simplicity assume that $\ f_0 = e_{k_1}\cdots e_{k_m}, \ $ then $$f \ - \ e_{k_1}\star \cdots \star e_{k_m}\ \in \ O(\hbar).$$ Assume next that $\ (f \ - \ e_{k_1}\star \cdots \star e_{k_m})_1 \ $ can be written as $\ e_{l_1} \cdots  e_{l_r}, \ $ then
$$f \ - \ e_{k_1}\star \cdots \star e_{k_m} \ - \ e_{l_1}\star \cdots \star e_{l_r}\hbar
\ \in \ O(\hbar^2).$$
Proceeding by induction we see that $f$ can be written as a formal power series in $\hbar$ with coefficients
equal to the sum of the $\star$-product of elementary multi-symmetric functions.
\end{proof}

Choose a variable $t_m$ for each $m \in \mathcal{M}_d$, the set of monomials in the variables $y_1,...,y_d$,  and set
$$\sum_{|\alpha| \leq n}e_{\alpha}t^{\alpha} \ = \ \prod_{i=1}^n\left(1 \ + \sum_{m \in \mathcal{M}_d}m(i)t_m \right),$$
where $\alpha:\mathcal{M}_d \ \longrightarrow \ \mathbb{N}$ of finite support, and
$t^{\alpha}=\prod_{m \in \mathcal{M}_d}t_m^{\alpha(m)}. $

\begin{thm}
{\em
The set $\{e_{\alpha}\hbar^k \ | \ |\alpha| \leq n, \ k \geq 0\}$ is a topological basis for the topological algebra $(\mathbb{R}[(\mathbb{R}^{d})^{n}][[\hbar]],\star)^{S_{n}}.$ The product of basic elements is given by Lemma \ref{l6} and Theorem \ref{prin}.
}
\end{thm}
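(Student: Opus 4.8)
The plan is to reduce this statement to a classical fact about multi-symmetric functions together with a standard description of the $\hbar$-adic topology on a module of formal power series. First I would identify the underlying topological $\mathbb{R}[[\hbar]]$-module. The $S_n$-action on $\mathbb{R}[(\mathbb{R}^{d})^{n}][[\hbar]]$ is induced by permutation of the points $x_1,\dots,x_n$ and is extended $\mathbb{R}[[\hbar]]$-linearly; in particular it preserves the $\hbar$-grading. Hence an element $f=\sum_{k\ge 0}f_k\hbar^k$ is $S_n$-invariant if and only if each coefficient $f_k\in\mathbb{R}[(\mathbb{R}^{d})^{n}]$ is $S_n$-invariant, so as a topological module
$$(\mathbb{R}[(\mathbb{R}^{d})^{n}][[\hbar]],\star)^{S_{n}}\ =\ \mathbb{R}[(\mathbb{R}^{d})^{n}]^{S_{n}}[[\hbar]],$$
equipped with the $\hbar$-adic topology. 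Since the first assertion is purely about this module structure (the $\star$-product enters only in the second assertion), it suffices to show that $\{e_\alpha : |\alpha|\le n\}$ is an $\mathbb{R}$-vector-space basis of $\mathbb{R}[(\mathbb{R}^{d})^{n}]^{S_{n}}$. Indeed, for any $\mathbb{R}$-basis $\{b_\alpha\}$ of a vector space $V$, the family $\{b_\alpha\hbar^k\}$ is a topological basis of $V[[\hbar]]$: every $\sum_k f_k\hbar^k$ expands uniquely as $\sum_{\alpha,k}c_{\alpha,k}b_\alpha\hbar^k$, where $f_k=\sum_\alpha c_{\alpha,k}b_\alpha$ is a finite sum, and this double family is $\hbar$-adically summable because only finitely many terms (those with $k<N$) lie outside any $\hbar^N V[[\hbar]]$.

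Next I would verify that $\{e_\alpha : |\alpha|\le n\}$ is such a basis. Writing a monomial in $y_1,\dots,y_d$ as $y^v$ with $v\in\mathbb{N}^d\setminus\{0\}$, an index $\alpha:\mathcal{M}_d\to\mathbb{N}$ of finite support records a multiset $\lambda$ of exponent vectors of total size $|\alpha|\le n$. By the first Lemma following Definition \ref{vacdef}, $e_\alpha=\sum_c\prod_v\prod_{i\in c_v}x_i^{v}$, a sum over families of pairwise disjoint subsets $c_v\subseteq[n]$ with $|c_v|=\alpha(y^v)$. Each monomial occurring in $e_\alpha$ thus has row-exponent multiset exactly $\lambda$, and every such monomial occurs exactly once; hence $e_\alpha$ is the monomial multi-symmetric function attached to $\lambda$. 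Distinct $\alpha$ yield distinct multisets $\lambda$ and therefore disjoint sets of monomials, which gives linear independence; spanning follows because the $S_n$-orbit sum of any monomial is one of the $e_\alpha$, and such orbit sums span the invariants $\mathbb{R}[(\mathbb{R}^{d})^{n}]^{S_{n}}$. This is the classical basis result recorded in \cite{Bri,Dal,Fles,MacD,Vac}.

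Combining the two steps establishes the topological basis assertion. For the product of two basis elements I would then invoke Theorem \ref{prin} to write $e_\alpha\star e_\beta=\sum_{m}\big(\sum_{\gamma\in \mathrm{Q}(\alpha,\beta,n,m)}e_\gamma(B(p,q))\big)\hbar^m$, where the tuple $B(p,q)$ contains the bi-differential expressions $B_k(p_l,q_r)$, which are polynomials rather than monomials. To return to the basis, each entry of $B(p,q)$ is expanded into monomials and Lemma \ref{l6} is applied to rewrite every $e_\gamma(B(p,q))$ as an $\mathbb{R}$-linear combination of the monomial functions $e_\delta$, thereby expressing $e_\alpha\star e_\beta$ in the claimed basis. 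The only genuinely non-formal ingredient is the classical basis fact of the second paragraph; the rest is the identification of the topology and the routine bookkeeping of re-expanding $e_\gamma(B(p,q))$ through Lemma \ref{l6}, and the point requiring care is to keep the $\hbar$-degree $m$ and the monomial re-expansion mutually consistent.
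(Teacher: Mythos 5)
Your proposal is correct and takes essentially the same route as the paper: identify each $e_{\alpha}$ with the symmetrization (orbit sum) of a monomial, invoke the classical fact that symmetrized monomials form a basis of $\mathbb{R}[(\mathbb{R}^{d})^{n}]^{S_{n}}$, and then pass to formal power series in $\hbar$ to obtain a topological basis. You supply details the paper leaves implicit (the coefficient-wise identification of $S_n$-invariants of $\mathbb{R}[(\mathbb{R}^{d})^{n}][[\hbar]]$ with $\mathbb{R}[(\mathbb{R}^{d})^{n}]^{S_{n}}[[\hbar]]$, the general statement that a basis of $V$ yields a topological basis of $V[[\hbar]]$, and the re-expansion of $e_{\gamma}(B(p,q))$ through Lemma \ref{l6}), but the underlying argument coincides with the paper's proof.
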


\begin{proof}
It is well-known that the symmetrization of monomials yields a basis for $\mathbb{R}[(\mathbb{R}^{d})^{n}]^{S_{n}}$, thus from Lemma \ref{sim} we see that the set
$\{e_{\alpha} \ | \ |\alpha| \leq n\}$ is a basis for $\mathbb{R}[(\mathbb{R}^{d})^{n}]^{S_{n}}$ as well.
Thus forming the products $e_{\alpha}\hbar^k$ we obtain a topological basis for
$(\mathbb{R}[(\mathbb{R}^{d})^{n}][[\hbar]],\star)^{S_{n}}.$
\end{proof}

Next result describes the product of multi-symmetric functions using the Kontsevich's $\star$-product. In this case one can give a more precise formula for the computation of the quantum higher corrections, i.e. the coefficients that accompany the higher order powers in $\hbar$.

\begin{thm}
\label{kont}{\em Let $(\mathbb{R}^{d},\{ \ , \ \} )$ be a regular Poisson manifold and $(\mathbb{R}[(\mathbb{R}^{d})^{n}][[\hbar]],\star)^{S_{n}}$
be the algebra of  quantum symmetric functions on $(\mathbb{R}^{d})^{n}$ with the Kontsevich $\star$-product. Fix $a,b, n \in\mathbb{N}^+$, $p\in R[y_1,...,y_d]^a$, and $q\in R[y_1,...,y_d]^b$.  Let $\alpha\in \mathbb{N}^a$ and  $\beta\in \mathbb{N}^b$ be such that $|\alpha|, |\beta|\leq n$. The $\star$-product of $e_{\alpha}(p)$ and $e_{\beta}(q)$ is given by:
$$e_{\alpha}(p)\star e_{\beta}(q)\ = \ \sum_{m=0}^\infty\left(
\sum_{\gamma\in \mathrm{K}(\alpha,\beta,n,m)}e_{\gamma}(B(p,q))\right)\hbar^m, \ \ \ \ \ \mbox{where:}$$
\begin{itemize}

\item  $B(p,q)=(p,q,......,\frac{ \omega_{\Gamma}}{\overline{\Gamma}!}B_{\Gamma}(p,q),......)\ $ and
$$B_{\Gamma}(p,q)\ = \ \left(B_{\Gamma}(p_1,q_1),..., B_{\Gamma}(p_1,q_b),..., B_{\Gamma}(p_a,q_1), ..., B_{\Gamma}(p_a,q_b)\right).$$ The polynomial $B_{\Gamma}(p_i,q_j)$ results of applying Kontsevich's bi-differential operator $B_{\Gamma}$ to the pair $(p_i, q_j)$.

\item $\mathrm{K}(\alpha,\beta,n,m)$ is the subset of   $\mathrm{Map}([0,a]\times[0,b] \times \mathrm{G}, \mathbb{N})$ consisting of maps $$\gamma: [0,a]\times[0,b] \times \mathrm{G} \ \longrightarrow \ \mathbb{N}
    \ \ \ \ \ \mbox{such that:}$$

\begin{itemize}
\item  $\gamma_{00\Gamma}=0;$  \ \ \  if either $\ l=0\ $ or $\ r=0, \ $ then $\ \gamma_{lr\Gamma}=0\ $ for $\ \overline{\Gamma} \geq 1$.

\item $|\gamma| = \sum\limits_{l=0}^{a}\sum\limits_{r=0}^{b}\sum\limits_{\Gamma \in \mathrm{G}}\gamma_{lr\Gamma} \leq n, \ \ \ \ \ \mbox{and}
\ \ \ \ \ \sum\limits_{l=0}^{a}\sum\limits_{r=0}^{b}\sum\limits_{\Gamma \in \mathrm{G}}\overline{\Gamma}\gamma_{lr\Gamma}=m.$

\item $\sum\limits_{r=0}^{b}\sum\limits_{\Gamma \in \mathrm{G}}\gamma_{lr\Gamma}=\alpha_l \ \ \  \mbox{for}\ \ l\in [a],
\ \ \ \ \ \mbox{and} \ \ \ \ \ \sum\limits_{l=0}^{a}\sum\limits_{\Gamma \in \mathrm{G}}\gamma_{lr\Gamma}=\beta_r \ \ \ \mbox{for}\ \  j\in [b].$

\end{itemize}

\end{itemize}}
\end{thm}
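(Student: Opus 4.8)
The plan is to run the computation of Theorem \ref{prin} verbatim, with the single index $k \in \mathbb{N}$ of that proof refined to a graph index $\Gamma \in \mathbb{G}$ and each weight $\hbar^{k}$ replaced by $\hbar^{\overline{\Gamma}}$. Recall from Section \ref{sec2} the modified Kontsevich form $f \star g = \sum_{\Gamma \in \mathbb{G}}\frac{\omega_\Gamma}{\overline{\Gamma}!}B_\Gamma(f,g)\hbar^{\overline{\Gamma}}$, where $\mathbb{G} = \bigsqcup_n \mathbb{G}_n$ is the set of admissible graphs and $\overline{\Gamma} = n$ for $\Gamma \in \mathbb{G}_n$. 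First I would form the double generating series $\sum_{|\alpha|,|\beta|\leq n}(e_\alpha(p)\star e_\beta(q))t^\alpha s^\beta$ and, exactly as in Theorem \ref{prin}, identify it with the $\star$-product of the two series $\prod_{i=1}^n(1+\sum_l p_l(i)t_l)$ and $\prod_{i=1}^n(1+\sum_r q_r(i)s_r)$.

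The one genuinely quantum ingredient, already tacitly used in Theorem \ref{prin}, is that the $\star$-product factorizes over the $n$ Cartesian copies of $\mathbb{R}^d$. Because the Poisson bivector on $(\mathbb{R}^d)^n$ is block diagonal, $\{x_{ki},x_{lj}\} = \delta_{kl}\alpha_{ij}(k)$, every Kontsevich operator $B_\Gamma$ with $\overline{\Gamma}\geq 1$ annihilates a pair of functions supported on distinct copies, so functions on different copies $\star$-commute and their $\star$-product coincides with their ordinary product. Together with associativity this yields $\prod_i A_i \star \prod_i B_i = \prod_i (A_i \star B_i)$, whence the $\star$-product of the two $n$-fold series equals $\prod_{i=1}^n\left[(1+\sum_l p_l(i)t_l)\star(1+\sum_r q_r(i)s_r)\right]$. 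Expanding each factor and substituting the graph expansion $p_l(i)\star q_r(i) = \sum_{\Gamma}\frac{\omega_\Gamma}{\overline{\Gamma}!}B_\Gamma(p_l(i),q_r(i))\hbar^{\overline{\Gamma}}$ (using that $B_\Gamma(p_l,q_r)(i)=B_\Gamma(p_l(i),q_r(i))$, since the substitution $y_j\mapsto x_{ij}$ intertwines with $B_\Gamma$) gives
$$\prod_{i=1}^{n}\left(1+\sum_{l=1}^a p_l(i)t_l+\sum_{r=1}^b q_r(i)s_r+\sum_{l=1}^a\sum_{r=1}^b\sum_{\Gamma\in \mathbb{G}}\frac{\omega_\Gamma}{\overline{\Gamma}!}B_\Gamma(p_l(i),q_r(i))\hbar^{\overline{\Gamma}}\right).$$

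Finally I would introduce graph-indexed formal variables $w_{lr\Gamma} = t_l s_r \hbar^{\overline{\Gamma}}$, with conventions $t_0=s_0=1$, so that the three boundary summands above correspond respectively to $w_{l0\Gamma_0}$, $w_{0r\Gamma_0}$ (with $\Gamma_0$ the order-zero graph) and $w_{lr\Gamma}$. Applying Definition \ref{vacdef} to the tuple $B(p,q)$ then rewrites the product as $\sum_\gamma e_\gamma(B(p,q))w^\gamma$ with $w^\gamma = \prod_{l,r,\Gamma}w_{lr\Gamma}^{\gamma_{lr\Gamma}}$. Imposing $w^\gamma = t^\alpha s^\beta \hbar^m$ and separately matching the exponents of $t_l$, of $s_r$, and of $\hbar$ reproduces the defining constraints of $\mathrm{K}(\alpha,\beta,n,m)$: the row condition $\sum_{r,\Gamma}\gamma_{lr\Gamma}=\alpha_l$, the column condition $\sum_{l,\Gamma}\gamma_{lr\Gamma}=\beta_r$, and the order condition $\sum_{l,r,\Gamma}\overline{\Gamma}\gamma_{lr\Gamma}=m$; the conventions $t_0=s_0=1$ force $\gamma_{00\Gamma}=0$ and $\gamma_{lr\Gamma}=0$ for $\overline{\Gamma}\geq 1$ whenever $l=0$ or $r=0$. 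I expect no serious obstacle beyond correctly separating the contributions of distinct graphs to a fixed power $\hbar^m$ — this is exactly what the condition $\sum \overline{\Gamma}\gamma_{lr\Gamma}=m$ records, since many distinct graphs $\Gamma$ share the same order $\overline{\Gamma}$ — together with verifying the factorization-over-copies property, after which the combinatorics is formally identical to that of Theorem \ref{prin}.
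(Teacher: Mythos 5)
Your proposal is correct and follows essentially the same route as the paper's proof: both form the double generating series, factor it over the $n$ copies, substitute the graph expansion $p_l(i)\star q_r(i)=\sum_{\Gamma}\frac{\omega_\Gamma}{\overline{\Gamma}!}B_\Gamma\bigl(p_l(i),q_r(i)\bigr)\hbar^{\overline{\Gamma}}$, introduce the auxiliary variables $w_{lr\Gamma}=t_ls_r\hbar^{\overline{\Gamma}}$, and match exponents of $t^{\alpha}s^{\beta}\hbar^{m}$ to obtain the constraints defining $\mathrm{K}(\alpha,\beta,n,m)$. The one point where you go beyond the paper --- justifying the factorization $\prod_i A_i \star \prod_i B_i=\prod_i(A_i\star B_i)$, which the paper uses tacitly --- is welcome, though your stated reason is slightly imprecise: a bi-differential operator $B_\Gamma$ whose graph touches only one ground vertex (e.g.\ a wheel with all spokes at the first argument) need not annihilate a pair of functions on distinct copies; what actually vanishes for such graphs is the weight $\omega_\Gamma$, by a dimension count in the configuration-space integral, and it is this vanishing that makes the $\star$-product of functions on distinct copies reduce to the ordinary product.
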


\begin{proof} We have that
$$\sum_{|\alpha|, |\beta| \leq n}\sum_{m=0}^{\infty}B_m(e_{\alpha}(p), e_{\beta}(q))t^{\alpha}s^{\beta}\hbar^m \ =
\ \sum_{|\alpha|, |\beta| \leq n}\left(e_{\alpha}(p)\star e_{\beta}(q)\right)t^{\alpha}s^{\beta}\ = $$
$$\left(\sum_{|\alpha| \leq n}e_{\alpha}(p)t^{\alpha}\right)\star \left(\sum_{|\beta| \leq n}e_{\beta}(q)s^{\beta}\right)\ = \
\prod_{i=1}^{n}\left(1\ +\ \sum_{l=1}^ap_l(i)t_l\right)\star \left(1\ +\ \sum_{r=1}^bq_r(i)s_r\right)\ = $$
$$\prod_{i=1}^{n}\left(1\ +\ \sum_{l=1}^ap_l(i)t_l\ + \ \sum_{r=1}^bq_r(i)s_r \ + \ \sum_{l=1}^a \sum_{r=1}^b p_l(i)\star q_r(i) t_l s_r \right)\ = $$
$$\prod_{i=1}^{n}\left(1\ +\ \sum_{l=1}^ap_l(i)t_l \ + \ \sum_{r=1}^bq_r(i)s_r \ + \
\sum_{l=1}^a \sum_{r=1}^b  \sum_{\Gamma \in \mathrm{G}} \frac{ \omega_{\Gamma} }{\overline{\Gamma}!}B_{\Gamma}(p_l(i),q_r(i))t_ls_r\hbar^{\overline{\Gamma}} \right)\ = $$
$$\prod_{i=1}^{n}\left(1\ +\ \sum_{l=1}^a p_l(i)w_{l0\emptyset}\ + \ \sum_{r=1}^bq_r(i)w_{0r\emptyset}\ + \ \sum_{l=1}^a \sum_{r=1}^b \sum_{\Gamma \in \mathrm{G}}\frac{ \omega_{\Gamma}}{\overline{\Gamma}!}B_{\Gamma}(p_l(i),q_r(i))w_{rk\Gamma}\right) \ = $$
$$\sum_{\gamma\in \mathrm{K}(\alpha,\beta,n,m)}e_{\gamma}(B(p,q))w^{\gamma}, \ \ \ \ \ \  \ \  \mbox{where:}$$

$$w^{\gamma}\ = \ \prod\limits_{l=0}^{a}\prod\limits_{r=0}^{b}\prod\limits_{\Gamma \in \mathrm{G}}w_{lr\Gamma}^{\gamma_{lr\Gamma}}\ = \ \prod\limits_{l=0}^{a}\prod\limits_{r=0}^{b}\prod\limits_{\Gamma \in \mathrm{G}}^{\infty}t_l^{\gamma_{lrk}}s_r^{\gamma_{lrk}}\hbar^{\overline{\Gamma}\gamma_{lr\Gamma}},$$ by convention $\emptyset$ stands for the unique graph in $\mathrm{G}$ with no edges (representing the classical product), and
 $$t_0=s_0=1,\ \ \ \ \ \ \  \ w_{ru\Gamma}=t_rs_u\hbar^{\overline{\Gamma}}\ \ \ \ \ \mbox{for}\ \ \ \ r,u\geq 0, \ \Gamma \in \mathbb{G}.$$
 For $t^{\alpha}s^{\beta}\hbar^m = w^{\gamma}$ we must have

$$\left(\prod\limits_{l=0}^{a}t_l^{\alpha_l}\right)\left(\prod\limits_{r=0}^{b}s_r^{\beta_r}\right)\hbar^m \ = \ \prod\limits_{l=0}^{a}\prod\limits_{r=0}^{b}\prod\limits_{\Gamma \in \mathrm{G}}t_l^{\gamma_{lrk}}s_r^{\gamma_{lrk}}\hbar^{\overline{\Gamma}\gamma_{lr\Gamma}} \ = $$
$$\left( \prod\limits_{l=1}^{a}t_l^{\sum\limits_{r=0}^{b}\sum\limits_{\Gamma \in \mathrm{G}}\gamma_{lr\Gamma}}\right)
\left( \prod\limits_{r=1}^{b}s_r^{\sum\limits_{l=0}^{a}\sum\limits_{\Gamma \in \mathrm{G}}\gamma_{lr\Gamma}}\right)
\hbar^{\sum\limits_{l=0}^{a}\sum\limits_{r=0}^{b}\sum\limits_{\Gamma \in \mathrm{G}}\overline{\Gamma}\gamma_{lr\Gamma}}.$$
Thus we conclude that:
$$\sum\limits_{r=0}^{b}\sum\limits_{\Gamma \in \mathrm{G}}\gamma_{lr\Gamma}=\alpha_l \ \ \mbox{for}\ l\in [a], \ \ \  \ \
\sum\limits_{l=0}^{a}\sum\limits_{\Gamma \in \mathrm{G}}\gamma_{lr\Gamma}=\beta_r \ \  \mbox{for}\  j\in [b], \ \ \ \ \
\sum\limits_{l=0}^{a}\sum\limits_{r=0}^{b}\sum\limits_{\Gamma \in \mathrm{G}}\overline{\Gamma}\gamma_{lr\Gamma}=m.$$

\end{proof}

\section{Symmetric Powers of the Weyl Algebras}

In this section we study the case of two dimensional canonical phase space, i.e. the symplectic manifold
$\mathbb{R}^{2}$ with the canonical Poisson bracket given as follows:
$$\{f,g \} \ = \ \frac{\partial f}{\partial x}\frac{\partial g}{\partial y} -
\frac{\partial f}{\partial y}\frac{\partial g}{\partial x}.$$

\begin{defi}{\em
The Weyl algebra is defined  by generators and relations by
$$W \ = \ \mathbb{R} \langle x,y \rangle[[\hbar]]/
\langle yx-xy-\hbar \rangle.$$ }
\end{defi}

The deformation quantization of $(\mathbb{R}^{2}, \{ \ , \ \})$ is well-known to be given by the Moyal product \cite{mo}.
Moreover, one has the following result.

\begin{thm}{\em
The Weyl algebra is isomorphic to the deformation quantization of polynomial functions
on $\mathbb{R}^{2}$ with the canonical  Poisson structure.}
\end{thm}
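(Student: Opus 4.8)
The plan is to exhibit an explicit isomorphism between the Weyl algebra $W = \mathbb{R}\langle x,y\rangle[[\hbar]]/\langle yx - xy - \hbar\rangle$ and the quantized polynomial algebra $(\mathbb{R}[x,y][[\hbar]],\star)$ equipped with the Moyal product. Since $W$ is presented by generators and relations, the natural approach is to construct a homomorphism out of $W$ by specifying where the generators go, checking that the defining relation is respected, and then verifying bijectivity. First I would define a map $\Phi\colon W \to (\mathbb{R}[x,y][[\hbar]],\star)$ on generators by sending the abstract generators $x$ and $y$ to the coordinate functions $x$ and $y$, and $\hbar$ to $\hbar$. To see that $\Phi$ is well-defined I must check that the single defining relation is sent to zero, i.e. that in the Moyal algebra one has $y \star x - x \star y = \hbar$. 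This is the crux computation: using the explicit Moyal formula coming from the canonical bracket $\{f,g\} = \partial_x f\,\partial_y g - \partial_y f\,\partial_x g$, one finds $x \star y = xy + \tfrac{1}{2}\hbar$ and $y \star x = xy - \tfrac{1}{2}\hbar$, so that $y\star x - x \star y = -\hbar$; a sign or normalization adjustment in the choice of $\Phi$ (sending $x\mapsto x$, $y\mapsto -y$, or absorbing a factor of $\tfrac{1}{2}$) makes the relation match exactly, and I would fix the normalization at this point.

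Once $\Phi$ is known to be a well-defined algebra homomorphism, I would establish that it is an isomorphism by producing an explicit linear isomorphism at the level of the underlying $\mathbb{R}[[\hbar]]$-modules and checking it is compatible with $\Phi$. The key structural fact is that both sides are free $\mathbb{R}[[\hbar]]$-modules with a common monomial basis: on the Weyl side, the normal-ordering argument shows that the ordered monomials $x^i y^j$ (with $x$'s to the left of $y$'s) span $W$, since every product can be reduced using $yx = xy + \hbar$ and this reduction terminates; on the Moyal side, the associated graded algebra with respect to the $\hbar$-adic filtration is exactly the commutative polynomial algebra $\mathbb{R}[x,y]$, whose monomials $x^iy^j$ form a basis. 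The plan is therefore to show $\Phi$ sends the ordered basis of $W$ to a set whose images form a topological basis of the Moyal algebra, which follows because $\Phi$ reduces modulo $\hbar$ to the identity on $\mathbb{R}[x,y]$ and both modules are $\hbar$-adically complete and separated.

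I would then conclude by a standard successive-approximation (or associated-graded) argument: a continuous $\mathbb{R}[[\hbar]]$-module map between two $\hbar$-adically complete free modules that induces an isomorphism on the quotients modulo $\hbar$ is itself an isomorphism. Since $\Phi$ is $\mathbb{R}[[\hbar]]$-linear, continuous for the $\hbar$-adic topology, and reduces to the identity map $\mathbb{R}[x,y]\to\mathbb{R}[x,y]$ at $\hbar = 0$, this general principle yields that $\Phi$ is bijective, and hence an algebra isomorphism.

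The main obstacle I anticipate is not the conclusion but the bookkeeping in the crux relation check together with the verification that the Moyal $\star$-product indeed defines the deformation quantization of $(\mathbb{R}^2,\{\ ,\ \})$ referred to in the statement; the Moyal product is only cited, so I would either invoke the cited reference directly or, to be self-contained, recall its closed form $f \star g = \sum_{k\ge 0} \tfrac{\hbar^k}{k!}\,B_k(f,g)$ where $B_k$ is the $k$-th power of the Poisson bidifferential operator, and verify on the generators that the higher-order terms $B_k(x,y)$ vanish for $k \ge 2$ so that the only quantum correction is the first-order bracket term. Getting the normalization of $\hbar$ (the factor $\tfrac{1}{2}$ in $f\star g = fg + \tfrac12\{f,g\}\hbar + O(\hbar^2)$) to line up with the relation $yx - xy = \hbar$ in $W$ is the one place where care is genuinely required.
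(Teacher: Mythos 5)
The paper never actually proves this theorem: it is quoted as a known fact, with the Moyal product cited from \cite{mo} and the explicit product formula $(x^{c}y^{d})\star(x^{f}y^{g})=\sum_{k}\binom{d}{k}(f)_k\,x^{c+f-k}y^{d+g-k}\hbar^k$ imported from \cite{DP}. Your generators-and-relations argument is therefore a self-contained alternative to a citation, and its skeleton is the right one: map the free algebra into the quantized algebra, check the single defining relation, and promote the mod-$\hbar$ isomorphism to an isomorphism by successive approximation. On the normalization worry: with the formula the paper actually uses, $y\star x = xy+\hbar$ and $x\star y = xy$, so $y\star x - x\star y=\hbar$ on the nose and no sign tweak is needed; the $\pm\tfrac{1}{2}\hbar$ you computed comes from the symmetric Moyal normalization of Section \ref{sec2}, whereas \cite{DP} works with the standard-ordered (normal-ordered) product, a gauge-equivalent quantization of the same bracket. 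Either convention works once fixed consistently, and your instinct that this is the one place requiring genuine care is correct.

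Two details in your bijectivity step need tightening. First, for the normal-ordered monomials $x^iy^j$ to span $W$ over $\mathbb{R}[[\hbar]]$ you must read $\langle yx-xy-\hbar\rangle$ as a closed ideal (equivalently, complete the quotient): reducing a series $\sum_k f_k\hbar^k$ term by term uses infinitely many applications of $yx\to xy+\hbar$, so the total correction lies only in the closure of the algebraic ideal, not in the ideal itself. Second, your criterion (continuous $\mathbb{R}[[\hbar]]$-linear map between complete modules, isomorphism mod $\hbar$, hence isomorphism) also requires the source $W$ to be $\hbar$-adically separated, which is not automatic for a quotient; either invoke the diamond lemma to get uniqueness of normal forms, hence $W\cong\mathbb{R}[x,y][[\hbar]]$ as topological modules, or prove injectivity directly: if $\Phi(m)=0$, expand $m$ over the spanning set, observe that $\Phi(x^iy^j\hbar^k)=x^iy^j\hbar^k+O(\hbar^{k+1})$, and annihilate the coefficients inductively in powers of $\hbar$. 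Relatedly, both sides are topologically free rather than free $\mathbb{R}[[\hbar]]$-modules (the algebra $\mathbb{R}[x,y][[\hbar]]$ properly contains $\mathbb{R}[x,y]\otimes\mathbb{R}[[\hbar]]$); your later appeal to topological bases and completeness is the correct notion, so this is a matter of wording, but the first two points are substantive hypotheses your final general principle silently assumes.
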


Our goal in this section is to study the deformation quantization of $(\mathbb{R}^2)^n/S_n$, which can be identified the algebra of quantum symmetric functions $$(\mathbb{R}[x_1,...,x_n,y_1,...,y_n][[\hbar]], \star )^{S_n},$$  or equivalently,
with the symmetric powers of the Weyl algebra $$(W^{\otimes n})/S_n.$$

One shows by induction \cite{DP} that the following identity
holds in the Weyl algebra:

$$(x^{c}y^{d})\star (x^{f}y^{g}) \ = \ \sum_{k=0}^{\mathrm{min}} B_k(x^{c}y^{d},x^{f}y^{g}) \hbar^k \ = \
 \sum_{k=0}^{\mathrm{min}} {d \choose k} (f)_k x^{c+f-k}y^{d+g-k} \hbar^k,$$

$$\mbox{ where} \ \ \ \ \ \min=\min(d,f) \ \ \ \ \ \ \ \mbox{and} \ \ \ \ \ \ \ (f)_k=f(f-1)(f-2)(f-k+1).$$

\begin{thm}\label{voy}
\em{ Consider $\mathbb{R}^{2}$ with its canonical Poisson structure.  Fix $a,b\in\mathbb{N}^+$, and let $$(x^{c_1}y^{d_1},...,x^{c_a}y^{d_a})\in \mathbb{R}[x,y]^a \ \ \ \ \ \ \mbox{and} \ \ \ \ \ \
(x^{f_1}y^{g_1},...,x^{f_b}y^{g_b})\in \mathbb{R}[x,y]^b.$$ For $\alpha \in \mathbb{N}^a$ and $\beta \in \mathbb{N}^b$ the following identity holds:
$$e_{\alpha}(x^{c_1}y^{d_1},...,x^{c_a}y^{d_a})\star e_{\beta}(x^{f_1}y^{g_1},...,x^{f_b}y^{g_b})\ =$$
$$\sum_{m=0}^{\infty}\left( \sum_{\gamma\in \mathrm{Q}(\alpha,\beta,n,m)}e_{\gamma}
(B(x^{c_1}y^{d_1},...,x^{c_a}y^{d_a},x^{f_1}y^{g_1},...,x^{f_b}y^{g_b}))\right)\hbar^m,$$ where:
$$B(x^{c_1}y^{d_1},...,x^{c_a},y^{d_a},x^{f_1}y^{g_1},...,x^{f_b}y^{g_b})\ =$$
$$(x^{c_1}y^{d_1},...,x^{c_a},y^{d_a},x^{f_1}y^{g_1},..., x^{f_b}y^{g_b},.....,{d_r \choose k} (f_r)_kx^{c_l+f_r-k}y^{d_l+g_r-k} ,.....).$$
}
\end{thm}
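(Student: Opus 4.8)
The plan is to recognize Theorem \ref{voy} as a direct specialization of the general quantum product formula, Theorem \ref{prin}, to the concrete case of the Weyl algebra where the bi-differential operators $B_k$ are given by an explicit closed form. The entire content of Theorem \ref{prin} is already the abstract statement $e_{\alpha}(p)\star e_{\beta}(q)=\sum_m\bigl(\sum_{\gamma\in \mathrm{Q}(\alpha,\beta,n,m)}e_{\gamma}(B(p,q))\bigr)\hbar^m$, so what remains is purely to substitute the specific polynomials $p_l=x^{c_l}y^{d_l}$ and $q_r=x^{f_r}y^{g_r}$ and to insert the known formula for $B_k$ on monomials in two canonical variables.

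First I would invoke Theorem \ref{prin} verbatim with the data $p=(x^{c_1}y^{d_1},\dots,x^{c_a}y^{d_a})$ and $q=(x^{f_1}y^{g_1},\dots,x^{f_b}y^{g_b})$, which immediately yields the outer structure of the claimed identity together with the correct indexing set $\mathrm{Q}(\alpha,\beta,n,m)$; nothing about that combinatorial set changes under specialization. Second, I would recall the closed formula stated just before the theorem, namely
$$B_k(x^{c}y^{d},x^{f}y^{g})\ = \ {d \choose k}(f)_k\,x^{c+f-k}y^{d+g-k},$$
which is the $\hbar^k$-coefficient of the Moyal/Weyl $\star$-product on monomials. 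Third, I would simply compute the entries of the tuple $B(p,q)=(p,q,\dots,B_k(p,q),\dots)$ appearing in Theorem \ref{prin}: the classical entries reproduce the monomials $x^{c_l}y^{d_l}$ and $x^{f_r}y^{g_r}$, while the $k$-th correction block $B_k(p,q)$ has entries $B_k(x^{c_l}y^{d_l},x^{f_r}y^{g_r})={d_l \choose k}(f_r)_k\,x^{c_l+f_r-k}y^{d_l+g_r-k}$. Substituting these entries into $B(p,q)$ produces exactly the displayed tuple in the statement.

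The only genuinely substantive input is the explicit evaluation of the Weyl-algebra $\star$-product on monomials, and that is supplied by the induction result cited immediately above the theorem, so I would treat it as given rather than reprove it. I therefore expect no real obstacle here: the proof is a bookkeeping argument that matches the generic correction terms $B_k(p_l,q_r)$ of Theorem \ref{prin} against their monomial values. The one point meriting a sentence of care is the index conventions — confirming that the running subscript in the tuple $B(p,q)$ (the correction block labeled by $k$ and by the pair $(l,r)$) lines up with the summation structure of $\mathrm{Q}(\alpha,\beta,n,m)$, where $\gamma_{lrk}$ records how many factors of the $k$-th correction of the $(l,r)$ pair are used. Once that alignment is noted, the result follows by direct substitution, and I would close by remarking that the generic monomial $x^{c_l}y^{d_l}$ and $x^{f_r}y^{g_r}$ occupy the $k=0$ slots while the binomial-and-falling-factorial coefficients populate the $k\geq 1$ slots, matching the constraint that $\gamma_{lrk}=0$ whenever $l=0$ or $r=0$ and $k\geq 1$.
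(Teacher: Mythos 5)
Your proposal is correct, but it takes a different route from the paper. You obtain Theorem \ref{voy} by specializing Theorem \ref{prin} --- substituting $p_l=x^{c_l}y^{d_l}$, $q_r=x^{f_r}y^{g_r}$ and replacing each abstract correction $B_k(p_l,q_r)$ by its closed Weyl-algebra value ${d_l \choose k}(f_r)_k\,x^{c_l+f_r-k}y^{d_l+g_r-k}$ --- whereas the paper never invokes Theorem \ref{prin} in this proof: it re-runs the entire generating-function computation from scratch, expanding $\prod_{i=1}^n\bigl(1+\sum_{l}x_i^{c_l}y_i^{d_l}t_l\bigr)\star\bigl(1+\sum_{r}x_i^{f_r}y_i^{g_r}s_r\bigr)$, inserting the explicit Weyl/Moyal formula for $x_i^{c_l}y_i^{d_l}\star x_i^{f_r}y_i^{g_r}$ inside the product over $i$, and matching coefficients of $t^{\alpha}s^{\beta}\hbar^m$ via the bookkeeping variables $w_{lrk}=t_ls_r\hbar^k$. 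Your route buys brevity and avoids duplicating the combinatorial bookkeeping; the paper's route is self-contained and works directly with the Moyal product, so it never has to address whether the Moyal product is a star product to which the general Theorem \ref{prin} applies (a deformation quantization is unique only up to equivalence, and the proof of Theorem \ref{prin} tacitly uses that the chosen star product factorizes over the $n$ Cartesian factors --- a property the Moyal product visibly has, which is exactly what the paper's direct computation exploits). That identification is the one step your write-up glosses over, though it is harmless at the paper's level of rigor. One further point in your favor: your coefficient ${d_l \choose k}$ is the correct one; the paper's statement writes ${d_r \choose k}$, which is evidently a typo, inconsistent both with the product formula displayed just before the theorem and with the $\min=\min\{d_l,f_r\}$ appearing in the paper's own proof.
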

\begin{proof} We have that:

$$\sum_{|\alpha|, |\beta| \leq n}\sum_{m=0}^{\infty}B_m(e_{\alpha}(x^{c_1}y^{d_1},...,x^{c_a}y^{d_a}), e_{\beta}(x^{f_1}y^{g_1},...,x^{f_b}y^{g_b}))t^{\alpha}s^{\beta}\hbar^m \ =$$

$$\sum_{|\alpha|, |\beta| \leq n}\left(e_{\alpha}(x^{c_1}y^{d_1},...,x^{c_a}y^{d_a})\star e_{\beta}(x^{f_1}y^{g_1},...,x^{f_b}y^{g_b})\right)t^{\alpha}s^{\beta} \ =$$

$$\left(\sum_{|\alpha|\leq n}e_{\alpha}(x^{c_1}y^{d_1},...,x^{c_a}y^{d_a})t^{\alpha}\right)\star \left(\sum_{|\beta|\leq n}e_{\beta}(x^{f_1}y^{g_1},...,x^{f_b}y^{g_b})s^{\beta}\right)\ =$$

$$\prod_{i=1}^n\left( 1\ +\ \sum_{l=1}^ax_i^{c_l}y_i^{d_l}t_l\right)\star\left( 1\ + \ \sum_{r=1}^bx_i^{f_r}y_i^{g_r}s_r\right)\ =$$

$$\prod_{i=1}^{n}\left(1\ + \ \sum_{l=1}^ax_i^{c_l}y_i^{d_l}t_l+\sum_{r=1}^bx_i^{f_r}y_i^{g_r}s_r \ + \
\sum_{l=1}^a \sum_{r=1}^b x_i^{c_l}y_i^{d_l} \star x_i^{f_r}y_i^{g_r}t_l s_r \right) \ =$$

$$\prod_{i=1}^{n}\left(1\ + \ \sum_{l=1}^ax_i^{c_l}y_i^{d_l}t_l+\sum_{r=1}^bx_i^{f_r}y_i^{g_r}s_r\ + \
\sum_{l=1}^a \sum_{r=1}^b  \sum_{k=0}^{\mathrm{min}} {d_r \choose k} (f_r)_kx_i^{c_l+f_r-k}y_i^{d_l+g_r-k} t_l s_r \hbar^k \right)\ =$$

$$\prod_{i=1}^{n}\left(1\ + \ \sum_{l=1}^ax_i^{c_l}y_i^{d_l}w_{l00}\ + \ \sum_{r=1}^bx_i^{f_r}y_i^{g_r}w_{0r0}\ + \
\sum_{l=1}^a \sum_{r=1}^b  \sum_{k=0}^{\mathrm{min}} {d_r \choose k} (f_r)_kx_i^{c_l+f_r-k}y_i^{d_l+g_r-k}w_{lrk} \right)\ =$$

$$\sum_{\gamma\in \mathrm{Q}(\alpha,\beta,n,m)}e_{\gamma}
(B(x^{c_1}y^{d_1},...,x^{c_a},y^{d_a},x^{f_1}y^{g_1},...,x^{f_b}y^{g_b}))w^{\gamma},$$
where $\min=\min\{d_l,f_r\}\ $ and $\ w_{lrk} =  t_l s_r \hbar^k$.
\end{proof}

\begin{cor}{\em With the assumptions of Theorem \ref{voy}, the Poisson bracket of the multi-symmetric functions $e_{\alpha}(p)$ and  $e_{\beta}(q)$ is given by
$$\{e_{\alpha}(p), e_{\beta}(q)\}\ = \ 2\sum_{\gamma\in \mathrm{Q}(\alpha,\beta,n,1)}e_{\gamma}
(B(x^{c_1}y^{d_1},...,x^{c_a},y^{d_a},x^{f_1}y^{g_1},...,x^{f_b}y^{g_b})).$$}
\end{cor}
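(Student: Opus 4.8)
The plan is to follow the same route used for the corollary to Theorem \ref{prin}, now specializing to the Weyl-algebra computation of Theorem \ref{voy}. The starting point is the defining property of a deformation quantization recalled in Section \ref{sec2}, namely that for polynomial functions $f,g$ one has $f\star g = fg + \frac{1}{2}\{f,g\}\hbar + O(\hbar^2)$. Reading off the coefficients of this expansion shows that $B_0(f,g)=fg$ and $B_1(f,g)=\frac{1}{2}\{f,g\}$, whence the Poisson bracket is recovered from the $\star$-product by the identity
$$\{f,g\}\ = \ 2\,\frac{\partial}{\partial \hbar}(f\star g)\Big|_{\hbar=0}.$$

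First I would apply this identity with $f=e_{\alpha}(p)$ and $g=e_{\beta}(q)$, where $p=(x^{c_1}y^{d_1},\dots,x^{c_a}y^{d_a})$ and $q=(x^{f_1}y^{g_1},\dots,x^{f_b}y^{g_b})$ are the tuples of monomials fixed in Theorem \ref{voy}, so that the identity reads $\{e_{\alpha}(p),e_{\beta}(q)\} = 2\,\frac{\partial}{\partial\hbar}(e_{\alpha}(p)\star e_{\beta}(q))\big|_{\hbar=0}$. Next I would substitute the explicit expansion supplied by Theorem \ref{voy},
$$e_{\alpha}(p)\star e_{\beta}(q)\ = \ \sum_{m=0}^{\infty}\left(\sum_{\gamma\in \mathrm{Q}(\alpha,\beta,n,m)}e_{\gamma}(B(p,q))\right)\hbar^m,$$
into the right-hand side. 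Differentiating this power series in $\hbar$ and evaluating at $\hbar=0$ simply extracts the coefficient of $\hbar^{1}$, which is precisely $\sum_{\gamma\in \mathrm{Q}(\alpha,\beta,n,1)}e_{\gamma}(B(p,q))$; multiplying by the factor $2$ then yields the claimed formula.

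There is essentially no obstacle here beyond bookkeeping: the whole content is carried by Theorem \ref{voy}, and the corollary is a term-by-term reading of its $m=1$ summand. The only point worth noting is that the index set $\mathrm{Q}(\alpha,\beta,n,1)$ genuinely isolates the first-order quantum corrections, since the constraint $\sum_{l,r,k} k\,\gamma_{lrk}=1$ forces exactly one entry $\gamma_{lr1}=1$ with $l,r\geq 1$ and no contribution from $k\geq 2$; but this is immediate from the definition of $\mathrm{Q}$ and requires no further computation.
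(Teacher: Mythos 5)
Your proposal is correct and takes essentially the same route as the paper: the paper derives this corollary (just as it proves the analogous corollary to Theorem \ref{prin}) by combining the expansion of Theorem \ref{voy} with the identity $\{f,g\} = 2\frac{\partial}{\partial \hbar}(f \star g)\mid_{\hbar = 0}$, i.e.\ by reading off the coefficient of $\hbar^{1}$ and multiplying by $2$. Your closing remark on the structure of $\mathrm{Q}(\alpha,\beta,n,1)$ is a harmless elaboration, not a needed extra step.
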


\begin{exa}{\em Let $p=y, \ q=x \ \in \mathbb{R}[x,y]$. We have that:
$$e_{\alpha}(y)\star e_{\beta}(x) =\sum_{m\geq 0}\sum_{\gamma\in \mathrm{Q}(\alpha,\beta,n,m)}e_{\gamma}(y,x,xy,1)\hbar^{m},$$
where the vectors $\gamma=(\gamma_{100},\gamma_{010},\gamma_{110},\gamma_{111})\in\mathbb{N}^4$ are such that $|\gamma|\leq n$ and
$$\gamma_{100}+\gamma_{110}+\gamma_{111}=\alpha, \ \ \ \ \ \ \gamma_{010}+\gamma_{110}+\gamma_{111}=\beta, \ \ \ \ \ \ \mbox{and}
\ \ \ \ \ \ \gamma_{111}=m.$$
For example, for $n=3, \ \alpha=2, \ \beta=3$, we have
$$e_2(y)\star e_3(x)\ = \ e_{(1,2)}(x,xy)\ + \  e_{(1,1,1)}(x,xy,1)\hbar+e_{(1,2)}(x,1)\hbar^2$$
since in this case $\gamma=(\gamma_{100}, \gamma_{010}, \gamma_{110},\gamma_{111})\in\mathbb{N}^4$ is such that
$$\gamma_{100}+\gamma_{010}+ \gamma_{110}+\gamma_{111}\leq 3, \ \ \ \ \gamma_{100}+\gamma_{110}+\gamma_{111}=2, \ \ \ \
\gamma_{010}+ \gamma_{110}+\gamma_{111}=3, \ \ \ \ \gamma_{111}=m.$$
Solving this equation for $m=0,1,2$ we, respectively, obtain $$ \gamma=(0,1,2,0),\ \ \ \ \ \gamma=(0,1,1,1),  \ \ \ \ \ \mbox{and} \ \ \ \ \ \gamma=(0,1,0,2),$$ yielding the desired result.\\

On the other hand,  from  Definition \ref{vacdef} we get that
$$e_{2}(y)=y_1y_2+y_1y_3+y_2y_3, \ \  e_3(x)=x_1x_2x_3, \ \ \mbox{and thus}$$
$$e_{2}(y)\star e_3(x) \ = \ (y_1y_2+y_1y_3+y_2y_3)\star(x_1x_2x_3)\ =$$ $$(x_1x_2x_3y_1y_2 + x_1x_2x_3y_1y_3 + x_1x_2x_3y_2y_3) \ + $$
$$ (x_1x_3y_1+x_1x_2y_1+x_2x_3y_2+x_1x_2y_2+x_2x_3y_3+x_1x_3y_3)\hbar\ + \ (x_1+x_2+x_3)\hbar^2,$$
which indeed is equal to
 $$e_{(1,2)}(x,xy)\ + \  e_{(1,1,1)}(x,xy,1)\hbar\ + \ e_{(1,2)}(x,1)\hbar^2.$$

Note that $\ \ \{e_2(y),e_3(x)\}=2 e_{(1,1,1)}(x,xy,1).$}
\end{exa}

\begin{exa}{\em Let $p=q=xy\in \mathbb{R}[x,y], \ $ then we have that
$$e_{\alpha}(xy)\star e_{\beta}(xy)\ = \ \sum_{m\geq 0}\sum_{\gamma\in \mathrm{QL}(\alpha,\beta,n,m)}e_{\gamma}(xy,xy,x^2y^2,xy)\hbar^m,$$
where  the vector $\ \gamma=(\gamma_{100},\gamma_{010},\gamma_{110},\gamma_{111})\in\mathbb{N}^4\ $ is such that $|\gamma|\leq n$ and
$$\gamma_{100}+\gamma_{110}+\gamma_{111}=\alpha, \ \ \ \ \ \gamma_{010}+\gamma_{110}+\gamma_{111}=\beta  \ \ \ \ \ \mbox{and}
\ \ \ \ \ \gamma_{111}=m.$$
Thus for $n=2,\ \alpha=2, \ \beta=1$,  we get
$$e_2(xy)\star e_1(xy)\ = \ e_{(1,1)}(xy,x^2y^2)\ + \  e_{(1,1)}(xy,xy)\hbar$$
since in this case $\ \gamma=(\gamma_{100}, \gamma_{010}, \gamma_{110},\gamma_{111})\in\mathbb{N}^4\ $ is such that
$$\gamma_{100}+\gamma_{010}+ \gamma_{110}+\gamma_{111}\leq 2, \ \ \ \ \ \gamma_{100}+\gamma_{110}+\gamma_{111}=2, \ \ \ \ \
\gamma_{010}+ \gamma_{110}+\gamma_{111}=1, \ \ \ \ \ \gamma_{111}=m.$$
Solving this equation for $m=0,1$ we, respectively, obtain
$$\gamma=(1,0,1,0) \ \  \  \mbox{and} \ \ \ \gamma=(1,0,0,1), \ \ \  \mbox{yielding the desired result.}$$

From  Definition \ref{vacdef} we have
$e_{2}(xy)=x_1y_1x_2y_2, \ \  e_1(xy)=x_1y_1+x_2y_2, \ $ and thus
$$e_{2}(xy)\star e_1(xy) = (x_1y_1x_2y_2) \star(x_1y_1+x_2y_2)=(x_1^2y_1^2x_2y_2+x_1y_1x_2^2y_2^2)\ +\ 2(x_1y_1x_2y_2)\hbar,$$
which indeed is equal to
$e_{(1,1)}(xy,x^2y^2)\ + \ e_{(1,1)}(xy,xy)\hbar.$\\

Note that $\ \ \{e_2(xy),e_1(xy)\}=2e_{(1,1)}(xy,xy).$
}
\end{exa}

%AQUI ESTA LA PRUBA DE ESTE RESULTADO
%\begin{proof}
%\begin{eqnarray*}
%\prod_{j=1}^{n}\left(1+tp(j)\right)\star \prod_{j=1}^{n}\left(1+sq(j)\right)&=&\prod_{j=1}^{n}\left(1+tx_jy_j\right)\star \left(1+sx_jy_j\right)= \end{eqnarray*}
%$$\prod_{j=1}^{n}(1+tx_jy_j+sx_jy_j+tsx_jy_jx_jy_j)=\prod_{j=1}^{n}(1+tx_jy_j+sx_jy_j+tsx_j(x_jy_j+\hbar)y_j) =$$
%$$\prod_{j=1}^{n}(1+tx_jy_j+sx_jy_j+tsx_j^2y_j^2+ts\hbar x_jy_j)$$
%\end{proof}

\begin{exa}\em{
Let $n=2, \ \alpha=\beta=2$, then
$$e_2(x^ay)\star e_2(xy^b)\ = \ e_2(x^{a+1}y^{b+1})\ + \ e_{(1,1)}(x^{a+1}y^{b+1},x^ay^b)\hbar \ + \ e_2(x^ay^b)\hbar^2.$$
Using Theorem \ref{prin} we have that $$e_2(x^ay)\star e_2(xy^b)\ = \ \sum_{\gamma\in \mathrm{QL}(2,2,2,m)}e_{\gamma}(x^ay,xy^b,x^{a+1}y^{b+1},x^ay^b)\hbar^m,$$ where  $\ \gamma=(\gamma_{100}, \gamma_{010}, \gamma_{110},\gamma_{111})\in\mathbb{N}^4$ is such that
$$\gamma_{100}+\gamma_{010}+ \gamma_{110}+\gamma_{111}\leq 2, \ \
\gamma_{100}+\gamma_{110}+\gamma_{111}=2, \ \
\gamma_{010}+ \gamma_{110}+\gamma_{111}=2, \ \
\gamma_{111}=m.$$
Solving this equation for $\ m=0,1,2\ $ we, respectively, obtain
$$\gamma=(0,0,2,0), \ \ \ \ \gamma=(0,0,1,1) \ \ \ \ \mbox{and} \ \ \ \ \gamma=(0,0,0,2).$$
Thus we get that
$$e_2(x^ay)\star e_2(xy^b)\ = \ e_2(x^{a+1}y^{b+1})\ + \ e_{(1,1)}(x^{a+1}y^{b+1},x^ay^b)\hbar\ + \ e_2(x^ay^b)\hbar^2.$$
On the other hand  from Definition \ref{vacdef} we have that
$$e_{2}(x^ay)=x_1^ay_1x_2^ay_2 \ \ \ \ \mbox{and} \ \ \ \ e_2(xy^b)=x_1y_1^bx_2y_2^b.$$ Computing directly the $\star$-product we obtain
$$e_{2}(x^ay)\star e_2(xy^b) \ = \ (x_1^ay_1x_2^ay_2) \star (x_1y_1^bx_2y_2^b) \ =$$ $$x_1^{a+1}y_1^{b+1}x_2^{a+1}y_2^{b+1}\ + \ (x_1^{a}y_1^{b}x_2^{a+1}y_2^{b+1}+x_1^{a+1}y_1^{b+1}x_2^{a}y_2^{b})
\hbar\ + \ x_1^{a}y_1^{b}x_2^{a}y_2^{b}\hbar^2\ =$$
$$e_2(x^{a+1}y^{b+1})\ + \ e_{(1,1)}(x^{a+1}y^{b+1},x^ay^b)\hbar\ + \ e_2(x^ay^b)\hbar^2.$$

Note that $\ \ \{e_{2}(x^ay),e_2(xy^b)\}=2e_{(1,1)}(x^{a+1}y^{b+1},x^ay^b).$
}
\end{exa}

\

We close this work stating the main problem that our research opens.

\begin{pro}\em{ Describe the relations in the algebra of quantum symmetric functions.
}
\end{pro}

\subsection*{Acknowledgment}
We thank Camilo Ortiz and Fernando Novoa  for helping us to develop the software used in the examples.
Eddy Pariguan thanks "Pontificia Universidad Javeriana" for providing funds for this research through the project  ID-00006185 titled
"Descripci\'on de producto cu\'antico para funciones multi-sim\'etricas."

\

\noindent ragadiaz@gmail.com\\
\noindent Instituto de Matem\'aticas y sus Aplicaciones, Universidad Sergio Arboleda, Bogot\'a, Colombia\\

\noindent epariguan@javeriana.edu.co\\
\noindent Departamento de Matem\'aticas, Pontificia Universidad Javeriana, Bogot\'a, Colombia\\

\end{document}